\def\thickhline{%
	\noalign{\ifnum0=`}\fi\hrule \@height \thickarrayrulewidth \futurelet
	\reserved@a\@xthickhline}
\def\@xthickhline{\ifx\reserved@a\thickhline
	\vskip\doublerulesep
	\vskip-\thickarrayrulewidth
	\fi
	\ifnum0=`{\fi}}
\newlength{\thickarrayrulewidth}
\newtheorem{defn}{Definition}[section]
\newtheorem{corollary}[defn]{Corollary}
\newtheorem{lemma}[defn]{Lemma}
\newtheorem{cor}[defn]{Corollary}
\newtheorem{prop}[defn]{Proposition}
\theoremstyle{definition}
\newtheorem{remark}[defn]{Remark}
\newtheorem{example}[defn]{Example}
\newcommand{\C}{\mathbb C}
\newcommand{\Z}{\mathbb Z}
\newcommand{\F}{\mathbb F}
\begin{document}
	\author{Francesc Bars}
	\address{Departament de Matem\`atiques, Universitat Aut\`onoma de Barcelona, Catalunya\,
		\\
	}
	\email{brancesc.bars@uab.cat}
	
	\author{Joan-C. Lario}
	
	\address{Departament de Matem\'atiques, 
		Universitat Polit\'ecnica de Catalunya \\
		Barcelona, Catalunya }
	
	\email{joan.carles.lario@upc.edu}

    \author{Brikena Vrioni}
	
	\address{School of Arts and Sciences \\
		American International Univeristy, Kuwait}
	
	\email{brikena.vrioni@yahoo.com}

	\title{Diophantine stability for curves over finite fields}
	\date{\today}

 	\begin{abstract}
		We carry out a survey on curves defined over finite fields that are Diophantine stable; that is, with the property that the set of points of the curve is not altered under a proper field extension. First, we derive some general results of such curves and then we analyze several families of curves that happen to be Diophantine stable.
	\end{abstract}
 
	\maketitle
	\tableofcontents

	\section{Introduction}
	\label{intro}
	
	Let $V$ be an algebraic variety defined over a field $K$. Borrowing Mazur-Rubin, we say that $V$ has Diophantine stability for a proper field extension $L/K$ if $V(L)=V(K)$. In this paper, we restrict ourselves to the case of curves defined over finite fields. 
	Throughout, for curve  we mean a geometrically irreducible projective smooth algebraic variety of dimension 1.
	
	From now on, let $C$ be a curve defined over a finite field of size a prime power $q$ and genus $g\geq 1$. The curve $C/\F_q$ is a Diophantine stable curve (DS-curve for short) if there is $m>1$ such that $C(\F_{q^m})=C(\F_q)$. As usual, we shall denote its Hasse-Weil zeta function by:
	$$
	\zeta(C/\F_q,t) = 
	\operatorname{exp}
	\left( \sum_{m=1}^{\infty} 
	\frac{N_m}{m}  t^m  \right) = 
	\prod_{d=1}^\infty
	(1-t^d)^{-a_d} 
	$$
	where $N_m=\#C(\F_{q^m})$ and 
	$a_d$ denotes the number of closed points of $C$ of degree $d$.
	One has the relation 
	$$
	N_m = \sum_{d\mid m} d \, a_d \,,
	$$
	and using the Möebius inversion formula we also have
	$$
	a_d = \frac{1}{d} \sum_{d'\mid d} \mu(d') N_{d'}\,.
	$$
	Due to the Weil conjectures \cite{Kat76}, we can express
	$$
	\zeta(C/\F_q,t) = \frac{P(t)}{(1-t)(1-qt)}
	$$
	with 
	$P(t) = \prod_{i=1}^g 
	(1- \pi_i t)(1-\overline{\pi}_it )\in \Z[t]$ and the reciprocal of the complex roots satisfy  $|\pi_i|=q^{1/2}$. 
	We call $P(t)$ the Frobenius polynomial of $C$. It is also common to consider the corresponding monic reciprocal polynomial
	$$L(t)= \prod_{j=1}^{g} (t- \pi_i )(t-\overline{\pi}_i )\in \Z[t]\,.$$
	The polynomial $L(t)$ is called the Weil polynomial (or $L$-polynomial) of the curve, and their roots $\pi_{i}$ are
	called Weil $q$-numbers. The relation between them is
	$L(t)= t^{2g } P(1/t)$. 
	
	For future use, we also introduce the real Weil polynomial $h(x)$
	that has roots the real numbers $\mu_i=\pi_{i}+
	\overline{\pi}_{i}$ for $1\leq i\leq g$. The real Weil polynomial has degree $g$ and satisfies
	$$
	P(t) = t^{g} h\left( \frac{q t^2+1}{t}\right) \,.
	$$
	Given $P(t)$, we can find $h(x)$ by means of the $t$-resultant
	$
	\operatorname{Res}_t(q t^2+1-t x, P(t)- t^{g} h(x) ) \,,
	$
	and vice versa, given $h(x)$, we can find $P(t)$ by means of the $x$-resultant
	$
	\operatorname{Res}_x(q t^2+1-t x, P(t)- t^{g} h(x) ) \,.
	$
	
	In order for the curve $C$ to be a DS-curve we need analyze the numbers $N_m$ (or equivalently the numbers $a_d$). Since the number of points satisfy:
	$$
	N_m = \# C(\F_{q^m})= 1+q^m - \sum_{i=1}^{g} (\pi_i^m+\overline{\pi}_i^m)\,,
	$$
	it seems natural to explore Diophantine stability through the study of the Hasse-Weil zeta function.
	
	In the following section we obtain some basic results regarding Diophantine stability over finite finite fields, and the subsequent sections collect a number of families of DS-curves: low genus curves, Deligne-Lusztig curves, $M$-torsion Carlitz curves, and $M$-torsion Drinfeld curves.

 The authors want to thank E.\,Howe, R.\,Lercier, C.\,Ritzenthaler, and A.\,Sutherland for useful comments on a first draft of the manuscript. The second author also wants to thank B.\,Mazur, A.\,Quirós, J.-P.\,Serre, and R.\,Schoof, for inspiring conversations on the subject at the beginning of this work. Several of the results of this paper were obtained  in the context of supervising B.\,Vrioni's dissertation \cite{Vri}.
	
	\section{Basic properties}
	
	We begin with a result about the finiteness of (isomorphism classes) of DS-curves of a given genus~$g\geq 1$.
	
	\begin{prop}
		\label{interval}
		Let $C/\F_q$ be a curve of genus $g\geq 1$ such that $C(\F_q)=C(\F_{q^m})$ for some $m>1$. Then, the pair $(q,m)$ has to be chosen from a finite set depending only on $g$.
	\end{prop}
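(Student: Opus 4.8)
The plan is to combine the Diophantine stability hypothesis $N_m=N_1$ with the Hasse--Weil bound. Recall from the Weil conjectures that for every $r\ge 1$ one has $N_r=1+q^r-\sum_{i=1}^g(\pi_i^r+\overline\pi_i^r)$ with $|\pi_i|=q^{1/2}$, and hence $\bigl|\pi_i^r+\overline\pi_i^r\bigr|\le 2q^{r/2}$. First I would subtract the two point-count expressions: the equality $N_m=N_1$ becomes
\[
q^m-q=\sum_{i=1}^g(\pi_i^m+\overline\pi_i^m)-\sum_{i=1}^g(\pi_i+\overline\pi_i),
\]
and taking absolute values together with the Weil estimate on each summand gives
\[
q^m-q\le 2g\bigl(q^{m/2}+q^{1/2}\bigr).
\]
Note that the left-hand side is strictly positive because $q\ge 2$ and $m\ge 2$.

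Next I would turn this into an absolute bound on $q^m$. Since $m\ge 2$ and $q\ge 2$, putting $N:=q^{m/2}\ge 2$ we have $q\le N$ and $q^{1/2}\le N$, so
\[
N^2-N\le q^m-q\le 2g(N+N)=4gN,
\]
which forces $N\le 4g+1$, that is, $q^m\le(4g+1)^2$.

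Finally, I would conclude: since $q$ is a prime power with $q\ge 2$ and $m\ge 2$ is an integer, the bound $q^m\le(4g+1)^2$ --- equivalently $q\le q^{m/2}\le 4g+1$ and $m\le 2\log_2(4g+1)$ --- admits only finitely many pairs $(q,m)$, and this finite set depends only on $g$.

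The argument involves no genuine difficulty; the only point needing a little care is to arrange the chain of inequalities so that it holds uniformly in $(q,m)$ and covers the boundary case $m=2$ (where $q=q^{m/2}$). I would also remark that the constant $(4g+1)^2$ is far from sharp --- better bounds follow from more careful estimates or from known refinements of the Weil bound --- but it is all that finiteness requires.
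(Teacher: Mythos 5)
Your proof is correct and follows essentially the same route as the paper: both arguments pin $N_1=N_m$ between the Hasse--Weil bounds around $1+q$ and $1+q^m$ and deduce that $q^m$ is bounded in terms of $g$. The only difference is cosmetic --- the paper uses the Serre-refined bound $g\lfloor 2\sqrt{q}\rfloor$ and leaves the final step as ``clear for $q^m$ large enough,'' whereas you use the plain Weil bound and make the conclusion explicit as $q^m\le(4g+1)^2$.
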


	\begin{proof} 
		Suppose that $C(\F_{q})=C(\F_{q^m})$ for some $m>1$. By the Hasse-Weil-Serre bound, one the one hand 
		we have
		that $N_1 =\# C(\F_{q})$ belongs to the interval centered in $1+q$ and radius 
		$g\,
		\lfloor 2\sqrt{q} \rfloor$:
		$$
		\vert N_1 - (1+q) \vert \leq 
		g\,
		\lfloor 2\sqrt{q} \rfloor \,.
		$$
		On the other hand, $N_m =\# C(\F_{q^m})$ belongs to the interval centered in $1+q^{m}$ and radius 
		$g\,
		\lfloor 2\sqrt{q^m} \rfloor$:
		$$
		\vert N_m - (1+q^{m}) \vert 
		\leq 
		g\,
		\lfloor 2\sqrt{q^m} \rfloor \,.
		$$
		Since we are assuming that $N_1=N_m$ with $m>1$, we need to show that the intersection of these two intervals is always empty except for a finite number of pairs $(q,m)$.
		Thus, let us show that if $q$ and $m$ are large enough, then one has
		$$
		(1+q) + g\,
		\lfloor 2\sqrt{q} \rfloor <
		(1+q^{m}) -  g\,
		\lfloor 2\sqrt{q^m} \rfloor\,.
		$$
		The above inequality is equivalent to 
		$$
		g( \lfloor 2\sqrt{q} \rfloor + 
		\lfloor 2\sqrt{q^m} \rfloor) < q^{m} - q
		$$
		which is clear to be true 
		when $q^m$ is large enough since $g$ is fixed. 
	\end{proof}
	
	Since the number of isomorphism classes of curves defined over a given number field is finite, we get the following consequence. 
	
	\begin{corollary}
		The number of isomorphism classes of DS-curves defined over finite fields of a given genus is finite.
	\end{corollary}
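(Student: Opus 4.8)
The plan is to combine Proposition~\ref{interval} with the standard finiteness of the set of $\F_q$-isomorphism classes of genus-$g$ curves over a \emph{fixed} finite field $\F_q$.

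First, by Proposition~\ref{interval}, if $C/\F_q$ is a DS-curve of genus $g$ then the pair $(q,m)$ lies in a finite set $S_g$ depending only on $g$. In particular, the set $\mathcal{Q}_g=\{\,q:\ \exists\, m>1 \text{ with } (q,m)\in S_g\,\}$ of prime powers that can occur as fields of definition of DS-curves of genus $g$ is finite.

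Second, I would show that for each fixed prime power $q$ there are only finitely many $\F_q$-isomorphism classes of curves of genus $g$. For $g\geq 2$, the tricanonical system $|3K_C|$ embeds $C$ in $\mathbb{P}^{5g-6}$ as a smooth curve of degree $6g-6$; such subschemes are parametrized by a locally closed subscheme of a Hilbert scheme that is of finite type over $\F_q$, hence has finitely many $\F_q$-points, and quotienting by the $\mathrm{PGL}_{5g-5}(\F_q)$-action only decreases the count. For $g=1$, every genus-$1$ curve over $\F_q$ has an $\F_q$-point by the Hasse--Weil bound (indeed $N_1=q+1-(\pi_1+\overline{\pi}_1)>0$), hence is an elliptic curve, and elliptic curves over $\F_q$ are determined by their $j$-invariant together with one of finitely many twists; so again there are finitely many. (Equivalently, one may invoke that $\mathcal{M}_g$ is of finite type over $\Z$, so $\mathcal{M}_g(\F_q)$ is finite, together with the finiteness of the set of forms of a curve, which follows from finiteness of its automorphism group.)

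Combining the two steps, every DS-curve of genus $g$ is defined over some $\F_q$ with $q\in\mathcal{Q}_g$, and over each such $\F_q$ there are finitely many isomorphism classes of genus-$g$ curves; since $\mathcal{Q}_g$ is finite, the conclusion follows. There is no genuine obstacle here; the only points requiring a little care are the case $g=1$ (no a priori marked point, resolved via Hasse--Weil) and the distinction between geometric isomorphism classes, parametrized by $\mathcal{M}_g$, and $\F_q$-isomorphism classes, which differ by finitely many twists.
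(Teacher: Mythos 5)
Your proposal is correct and follows essentially the same route as the paper: the paper deduces the corollary directly from Proposition~\ref{interval} together with the (quoted, not proved) standard fact that there are only finitely many isomorphism classes of genus-$g$ curves over any fixed finite field. You merely supply the standard details (tricanonical embedding, twists, the genus-$1$ case) that the paper takes for granted.
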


	A similar argument along the above lines allows us to obtain an immediate generalization for the case of algebraic varieties of higher dimension defined over finite fields. We left the proof to the reader.
	
	\begin{prop}
		Let $V$ be a non-singular projective variety of a given dimension~$d$  and given Betti numbers $\beta_i$ over a finite field. If $V$ is a DS-variety for $\F_{q^m}/\F_q$, then the pair $(q,m)$ is chosen
		from a finite set. 
		In particular, the number of isomorphism classes of $d$-dimensional non-singular projective varieties with prescribed Betti numbers that are DS-varieties is finite.
	\end{prop}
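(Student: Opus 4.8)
The plan is to carry over the argument of Proposition~\ref{interval} almost verbatim to arbitrary dimension, using the Weil conjectures \cite{Kat76} for $V$ in place of the Hasse--Weil--Serre bound for curves. Concretely, I would fix a prime $\ell$ not dividing $q$ and start from the Grothendieck--Lefschetz trace formula
$$
\# V(\F_{q^m}) \;=\; \sum_{i=0}^{2d} (-1)^i \operatorname{Tr}\!\left( \operatorname{Frob}_q^{\,m} \,\big|\, H^i_{\text{\'et}}(\overline V,\Q_\ell)\right).
$$
By the Riemann hypothesis over finite fields, every eigenvalue of $\operatorname{Frob}_q$ on $H^i$ has absolute value $q^{i/2}$. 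Assuming $V$ geometrically connected (so that $\beta_0=\beta_{2d}=1$ and the terms $i=0,2d$ contribute $1$ and $q^{md}$), I would bound the remaining terms by setting $B:=\sum_{i=1}^{2d-1}\beta_i$, a number depending only on $d$ and the prescribed Betti numbers, to obtain
$$
\left| \# V(\F_{q^m}) - q^{md} - 1 \right| \;\le\; \sum_{i=1}^{2d-1} \beta_i\, q^{mi/2} \;\le\; B\, q^{m(2d-1)/2}.
$$
Thus $\# V(\F_{q^m})$ lies in the interval $I_m$ centered at $q^{md}+1$ of radius $B\, q^{m(2d-1)/2}$, which is the exact analogue of the two intervals used in the proof of Proposition~\ref{interval}.

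Next, if $V$ is a DS-variety for $\F_{q^m}/\F_q$ with $m>1$, then $\#V(\F_q)=\#V(\F_{q^m})$, hence $I_1\cap I_m\neq\varnothing$, which forces
$$
q^{md} - q^d \;\le\; B\!\left( q^{(2d-1)/2} + q^{m(2d-1)/2}\right).
$$
I would finish this half exactly as in Proposition~\ref{interval}: for $m\ge 2$ and $q\ge 2$ both $q^d$ and $q^{(2d-1)/2}$ are at most $q^{m(2d-1)/2}$, so the inequality yields $q^{md}\le (2B+1)\,q^{m(2d-1)/2}$, that is $q^{md-m(2d-1)/2}=q^{m/2}\le 2B+1$, whence $q^m\le (2B+1)^2$. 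This leaves only finitely many pairs $(q,m)$ with $q$ a prime power and $m>1$. The mechanism is the same as for curves: the strict gap $d>(2d-1)/2$ between the top cohomological degree $2d$ and the next one makes the main term $q^{md}$ outgrow the error of order $q^{m(2d-1)/2}$ as soon as $q$ or $m$ is large.

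For the final assertion I would combine this with the statement that, over a \emph{fixed} finite field $\F_q$, only finitely many isomorphism classes of non-singular projective $d$-folds have the prescribed Betti numbers; since $q$ then ranges over the finite set just produced, finiteness follows. I expect this last input to be the real obstacle. For curves it is automatic --- fixing $g$ bounds everything because $\mathcal M_g$ is of finite type --- but in higher dimension the Betti numbers alone need not bound the variety (for instance they do not pin down $(K^2,\chi)$ for surfaces of general type in positive characteristic), so one has to invoke a genuine boundedness theorem placing all such $V$ in finitely many families of finite type over $\F_q$, each contributing finitely many $\F_q$-rational points. The point-count estimate itself, by contrast, is a routine transcription of the genus-$g$ computation.
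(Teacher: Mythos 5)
Your argument is precisely the ``similar argument along the above lines'' that the paper itself leaves to the reader: the Lefschetz trace formula plus Deligne's bounds give intervals of center $q^{md}+1$ and radius $B\,q^{m(2d-1)/2}$, and the gap $d>(2d-1)/2$ forces the two intervals apart for all but finitely many $(q,m)$, exactly as in Proposition~\ref{interval}. Your caveat about the final assertion is also fair --- the paper likewise asserts, without argument, that fixing $d$, the Betti numbers, and the (now finite set of) base fields leaves only finitely many isomorphism classes --- so your write-up is at least as complete as the paper's.
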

	
	Our second result concerns the determination of a list containing 
	all possible Frobenius polynomials~$P(t)$ for the numerators of the potential Hasse-Weil zeta functions attached to curves of fixed genus $g$ defined over a fixed finite field $\F_q$. Equivalently, we can ask for the list of possible candidate Weil polynomials $L(t)$ or the list of possible 
	candidate real Weil polynomials $h(x)$. In all three cases, the sequence of possible 
	number of places $[a_1,a_2,a_3,\dots,a_g]$ determines either $P(t)$, $L(t)$, and $h(x)$.
	For instance, writing 
	$$P(t)=\sum_{n=0}^{2g} A_n t^n$$ with
	$A_{k}= q^{k-g} A_{2g-k}$ for $k>g$, from the Taylor series in both sides of the formal identity
	$$
	\frac{(1-t)(1-qt)}
	{(1-t)^{a_1} (1-t^2)^{a_2} \dots 
		(1-t^g)^{a_g}}=
	P(t) \prod_{d\geq g}(1-t^d)^{a_d}\,,
	$$
	we can get polynomial expressions 
	$A_n=A_n(a_1,a_2,\dots,a_g)$ with rational coefficients. As we shall see, the coefficients $H_n(a_1,a_2,\dots,a_g)$
	of the potential real Weil polynomials
	$$
	h(x) = \sum_{n=0}^g H_{g-n}(a_1,a_2,\dots,a_g) x^n
	$$
	satisfy a remarkable property that helps us to determine the list of candidates in an efficient way. In any case, after getting the list of candidate polynomials, the hard task is to discard the cases that do not correspond to any curve. To this end, one can use the partial criteria of Serre and Howe \cite{Ser20}.
	
	There are at least two different methods to face the task to construct the list of 
	candidate sequences 
	$[a_1,a_2,\dots,a_g]$ for the number of places of curves of genus $g$ defined over $\F_q$. A first option is to make use of the Weil-Serre explicit formulas:
	one can choose a double-positive function $F(t)\gg 0$, 
	$F(t) = 1 + 2 \sum_{n\geq 1} c_n \, \cos(nt)$ such that $c_n=0$ for all $n>g$ and $c_1\neq 0$. Then one has
	$$
	\sum_{d\geq 2} d a_d \left( 
	\sum_{d\mid n} c_n q^{-n/2}
	\right) \leq 
	g + \sum_{n\geq 1} c_n q^{n/2} 
	+(1-a_1) 
	\sum_{n\geq 1} c_n q^{-n/2} \,,
	$$
	so that we get the list of all positive integers 
	$[a_1,a_2,a_3,\dots,a_g]$ satisfying the inequality above.
	Since all the coefficients of the unknown $a_d$ in the above inequality are non-zero for $d\leq g$, we can guarantee that the list of candidates $[a_1,a_2,a_3,\dots,a_g]$ is a finite list. The problem of this method is that one usually gets a huge list of candidates.
	
	An alternative route is as follows. We want to determine the list of candidate real Weil polynomials 
	$$
	h(x) = x^g + H_{1} x^{g-1} +\dots + H_{g-2} x^2 + H_{g-1} x+ H_g
	$$
	where each $H_n = H_n(a_1,\dots,a_g)$
	is a polynomial expression 
	in $\mathbb{Q}[a_1,\dots,a_g]$.
	
	\begin{prop}
		\label{recursionH}
		For every $1\leq i \leq g$, 
		the polynomial 
		$H_i(a_1,\dots,a_g)$ has multi-degree $(1,0,\dots,0)$ 
		in the variables 
		$(a_i,a_{i+1},\dots,a_g)$. In other words,
		we can write 
		$H_i=H_i(a_1,a_2,\dots,a_i)$ linearly in the variable $a_i$.
	\end{prop}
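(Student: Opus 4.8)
The plan is to exploit the factorization $h(x)=\prod_{i=1}^g(x-\mu_i)$, where $\mu_i=\pi_i+\overline\pi_i$: up to sign, $H_i$ is the $i$-th elementary symmetric function of the $\mu_i$'s, that is, $H_i=(-1)^i\,e_i(\mu_1,\dots,\mu_g)$ with $e_i$ the $i$-th elementary symmetric polynomial. The idea is to route the computation through the power sums $p_k:=\sum_{i=1}^g\mu_i^k$, to express each $p_k$ in terms of the $a_d$'s (via the $N_m$'s), and then to recombine via Newton's identities, tracking degrees throughout.

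The key arithmetic input is that, since $\pi_i\overline\pi_i=q$, the quantity $\pi_i^m+\overline\pi_i^m$ is a universal polynomial in $\mu_i$ with coefficients in $\Z[q]$, of the form $\mu_i^m-m\,q\,\mu_i^{m-2}+\cdots$, involving only the monomials $\mu_i^m,\mu_i^{m-2},\dots$ of the same parity as $m$; these are the Dickson/Chebyshev-type polynomials determined by $s_0=2$, $s_1=\mu_i$, $s_m=\mu_i\,s_{m-1}-q\,s_{m-2}$. Summing over $i$ and using $N_m=1+q^m-\sum_{i=1}^g(\pi_i^m+\overline\pi_i^m)$ gives
$$
p_m\;=\;1+q^m-N_m\;+\;\bigl(\text{a }\Z[q]\text{-linear combination of }p_{m-2},\,p_{m-4},\,\dots\bigr)\,.
$$
Now $N_m=\sum_{d\mid m}d\,a_d$ is $\Z$-linear in $a_1,\dots,a_m$ with the coefficient of $a_m$ equal to $m$; hence, by induction on $m$ (base cases $p_0=g$ and $p_1=1+q-a_1$), one obtains $p_m\in\Z[q][a_1,\dots,a_m]$, with $p_m$ linear in $a_m$ and the coefficient of $a_m$ equal to $-m$.

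To finish I would feed this into Newton's identities, which for $1\le i\le g$ give $i\,e_i=(-1)^{i-1}p_i+(\text{a polynomial in }e_1,\dots,e_{i-1},\,p_1,\dots,p_{i-1})$. A second induction, now on $i$, shows that $e_i\in\Q[a_1,\dots,a_i]$ is linear in $a_i$: the variable $a_i$ enters only through the term $\tfrac{(-1)^{i-1}}{i}\,p_i$, so its coefficient in $e_i$ is $\tfrac{(-1)^{i-1}}{i}\cdot(-i)=(-1)^i$. Therefore $H_i=(-1)^i e_i\in\Q[a_1,\dots,a_i]$ is linear in $a_i$ with the coefficient of $a_i$ equal to $(-1)^i\cdot(-1)^i=1$; in particular $H_i$ is independent of $a_{i+1},\dots,a_g$, which is exactly the stated multi-degree $(1,0,\dots,0)$ in $(a_i,a_{i+1},\dots,a_g)$. (Alternatively, one can unwind $P(t)=t^g h\!\left(\tfrac{qt^2+1}{t}\right)=\sum_{j=0}^g H_j\,t^j(qt^2+1)^{g-j}$ directly: comparing the coefficient of $t^n$ on both sides yields $A_n=H_n+\sum_j\binom{g-j}{(n-j)/2}q^{(n-j)/2}H_j$, the sum over $0\le j<n$ with $n-j$ even, a relation triangular in the $H_j$'s; inverting it and using that $A_n$ is linear in $a_n$ and involves no $a_d$ with $d>n$ gives the same conclusion.)

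All of this is elementary; the one point that needs care is the degree-and-parity bookkeeping in the two inductions — making sure that the remainder in the recursion for $p_m$ really involves only $p_{m-2},p_{m-4},\dots$ (so it contributes no $a_j$ with $j\ge m$), and that Newton's identities bring in $a_i$ only through the single linear term. Neither is hard, but together they are what make the linearity assertion go through.
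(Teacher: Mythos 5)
Your argument is correct, and its main route is genuinely different from the paper's. The paper proves the statement in two stages: first it shows the analogous property for the coefficients $A_k$ of $P(t)$ by extracting the coefficient of $t^k$ from the product expansion $(1-t)(1-qt)\prod_d(1-t^d)^{-a_d}$ (only the factors with $d\le k$ can contribute, and $a_k$ enters only through $-\binom{-a_k}{1}=a_k$), and then it transfers the property to the $H_i$ by observing that the substitution $x=(qt^2+1)/t$ produces a lower-triangular, non-singular linear system relating the $A_n$ to the $H_k$ --- which is exactly your parenthetical alternative at the end. Your primary route instead goes through the roots: writing $H_i=(-1)^ie_i(\mu_1,\dots,\mu_g)$, using the Dickson-type recursion $s_m=\mu s_{m-1}-qs_{m-2}$ to show that the power sum $p_m$ lies in $\Z[q][a_1,\dots,a_m]$ and is linear in $a_m$ with coefficient $-m$, and then invoking Newton's identities, in which $a_i$ enters $e_i$ only through the single term $\tfrac{(-1)^{i-1}}{i}p_i$. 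Both inductions close as you say, and your bookkeeping correctly yields that the coefficient of $a_i$ in $H_i$ is $1$ (consistent with the paper's genus-$5$ example, where $H_i$ always contains $a_i$ with coefficient exactly $1$); the paper's proof obtains this only implicitly through the unit diagonal of its triangular system. What your route buys is this explicit normalization and the avoidance of the matrix inversion; what the paper's route buys is that it never mentions the Weil numbers and works directly with the formal polynomials $A_n(a_1,\dots,a_g)$ defined by the zeta-function identity --- in your version one should note (though it is routine) that the identification of your formally defined $p_m$ and $e_i$ with the paper's $H_i$ rests on the identity $N_m=1+q^m-\sum_i(\pi_i^m+\overline{\pi}_i^m)$ together with the fact that all the changes of variables involved are triangular and invertible, so the polynomial identities hold identically in $\Q[a_1,\dots,a_g]$.
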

	
	\begin{proof} 
		First, we prove that the $2g$-degree polynomial 
		$P(t)=\sum_{n=0}^{2g} A_n t^n$ with indeterminate coefficients $A_n=A_n(a_1,a_2,\dots, a_g)$ satisfies the reverse statement. More precisely, we want to show that for every $k<g$ the polynomial $A_k=A_k(a_1,a_2,\dots, a_g)=
		A_k(a_1,a_2,\dots, a_k)$
		does not depend on $a_{k+1},\dots,a_g$ and it is of degree one in the variable $a_k$. We have $A_0(a_1,a_2,\dots, a_g)=1$ and $A_1(a_1,a_2,\dots, a_g)= a_1-(q+1)$.
		The polynomial 
		$A_k(a_1,a_2,\dots, a_g)$ is the $k$th coefficient 
		of the Taylor series of
		$$
		\begin{array}{l}
			\displaystyle{\frac{(1-t)(1-qt)}
				{(1-t)^{a_1} (1-t^2)^{a_2} \dots 
					(1-t^g)^{a_g}}}  =   \\[19pt]
			(1-t)(1-qt)(1-t)^{-a_1} (1-t^2)^{-a_2} \dots 
			(1-t^g)^{-a_g} = \\[13pt]
			\displaystyle{ (1-t)(1-qt)
				\left(
				\sum_{n=0}^\infty \binom{-a_1}{n}(-t)^{n}
				\right)
				\left(
				\sum_{n=0}^\infty \binom{-a_2}{n}(-t)^{2n}
				\right)
				\left(
				\sum_{n=0}^\infty \binom{-a_g}{n}(-t)^{gn}
				\right)
			}
		\end{array}
		$$
		where $\binom{\alpha}{n}$ denotes the generalized binomial number. To compute the coefficient of $t^k$ we only need to take care of the partial product
		$$
		(1-(q+1)t-q t^2) \prod_{i=1}^k 
		\left(
		\sum_{n=0}^\infty \binom{-a_i}{n}(-t)^{i n}
		\right).
		$$
		Therefore
		$A_k=A_k(a_1,a_2,\dots, a_g)=
		A_k(a_1,a_2,\dots, a_{k})$ does not depend on the variables $a_{k+1},\dots,a_g$. The 
		unique contribution of $a_k$ into the coefficient of $t^k$  occurs in
		$$
		(1-(q+1)t-q t^2)  
		\left(
		\sum_{n=0}^\infty \binom{-a_k}{n}(-t)^{k n}
		\right)\,
		$$
		and it is equal to $-\binom{-a_k}{1}= a_k$.
		
		The claim on the coefficients of the real Weil polynomial $h(x)$ follows from the equality 
		
		$$
		P(t) = t^g h\left( \frac{qt^2+1}{t}\right) \,.
		$$
		
		Indeed, letting $P(t)=\sum_{n=0}^{2g} A_n t^n$ with
		$A_{k}= q^{k-g} A_{2g-k}$ for $k>g$, 
		and $h(x)=\sum_{n=0}^g H_n x^{g-n}$ with $H_0=1$,
		one has
		$$
		\begin{array}{l}
			\displaystyle{
				\sum_{n=0}^{2g} A_n t^n} =
			\displaystyle{t^g 
				{\left(
					\sum_{n=0}^g H_n 
					{\left(
						\frac{qt^2+1}{t}
						\right)}^{g-n}
					\right)} = 
				\sum_{n=0}^g H_n (qt^2+1)^{g-n} t^n
				= }      \\[12pt]
			\displaystyle{\sum_{n=0}^g H_n 
				\sum_{k=0}^{g-n} \binom{g-n}{k} q^k t^{2k+n} = 
				\sum_{n=0}^{2g} 
				\left(
				\sum_{\substack{k=0\\ k\equiv n (2) }}^{n} H_{k}
				\binom{g-k}{(n-k)/2} q^{(n-k)/2} 
				\right)
				t^{n}
			}   \,. 
		\end{array}
		$$
		Hence, for $0\leq n\leq 2g$, it holds
		$$
		A_n =
		\sum_{\substack{k=0\\ k\equiv n (2) }}^{n} 
		H_{k} \binom{g-k}{(n-k)/2} q^{(n-k)/2}\,.
		$$
		The matrix of the corresponding linear system is lower triangular and 
  non-singular. Thus, the inverse matrix is upper triangular and it follows that $H_i=H_i(a_1,\dots,a_i)$ and of degree one in $a_i$ as desired, since we have proved the same property for the polynomials $A_i=A_i(a_1,\dots,a_i)$ for $i\leq g$ and 
		$A_{i}= q^{i-g} A_{2g-i}$ for $i>g$.
	\end{proof}

	The roots of $h(x)$ must be real and contained in the interval $[-2\sqrt{q},2\sqrt{q}]$. The same assertion holds also for the derivatives of $h(x)$ by Rolle's theorem. Then, to obtain the list of candidate sequences $[a_1,a_2,\dots,a_g]$, we can proceed by recursion as follows.
	
	
	Start with a candidate of length one $[a_1]$, with $a_1$ in the Hasse-Weil-Serre interval. By increasing $i$ from $2$ to $g$, suppose we have the list of partial candidate sequences of
	length $i-1$.
	For each one of the candidates $[a_1,a_2,\dots,a_{i-1}]$, 
	we substitute these values  into the $(g-i)$th derivative
	$$
	h^{(g-i)}(x) = T_i(x) + t(a_{i})
	$$
	where $T_i(x)$ is a $i$-degree polynomial in $\mathbb{Q}[x]$ with no constant term, and $t(a_i)$ is the constant term. That is,
	$t(a_{i}) = 
	(g-i)!\, H_i(a_1,\dots,a_{i-1},a_i)$ which is a linear 
	polynomial in $\mathbb{Q}[a_{i}]$ due to Proposition~\ref{recursionH}.
	
	Obviously, we do not know how to compute the roots of $h^{(g-i)}(x)$ since we do not know the value of $a_{i}$, but we can (and do) compute the roots of the derivative $T_i(x)'\in \mathbb{Q}[x]$. Let
	$\alpha_1,\dots , \alpha_{i-1}$ be the roots of $T_i(x)'$. By recursion, we know that all of them are real and belong to the interval $[-2\sqrt{q},2\sqrt{q}]$ since they are also the roots of $h^{(g-i+1)}(x)$. Let 
	$\alpha_0=-2\sqrt{q}$ and $\alpha_i=2\sqrt{q}$.
	For even~$i$, set  
	$$m=\displaystyle{
		\max_{j\textrm{ odd}}} \{ T_i(\alpha_j) \}\,,
	\qquad 
	M=\displaystyle{
		\min_{j\textrm{ even}}} \{ T_i(\alpha_j) \}\,.$$
	For odd $i$, set
	$$m=\displaystyle{
		\max_{j\textrm{ even}}} \{ T_i(\alpha_j) \}\,,
	\qquad 
	M=\displaystyle{
		\min_{j\textrm{ odd}}} \{ T_i(\alpha_j) \}\,.$$
	Finally, it remains to solve the linear inequalities in integers
	$$
	0\leq a_i \,,\quad 
	t(a_{i}) \leq \min\{M,|m|\}
	$$
	when $i$ is even, or 
	$$
	0\leq a_i \,,\quad 
	t(a_{i}) \leq \min\{|M|,m\}
	$$
	when $i$ is odd, 
	since we want the translates 
	$h^{(g-i)}(x) = T_i(x) + t(a_{i})$ to have all the roots in $[-2\sqrt{q},2\sqrt{2}]$.
	Every solution $a_i$ contributes to enlarge the list of partial sequence of candidates with $[a_1,a_2,\dots,a_{i-1},a_i]$.

	\vskip 0.25truecm
	
	\noindent Let us illustrate the above procedure with an example. 
	
	\vskip 0.25truecm
	
	\noindent{\bf The elephant silhouette}. Take genus $g=5$ and finite field of size $q=2$. Assume we start with $[a_1]=[9]$. Formally, the real Weil polynomial is given by
	$$
	\begin{array}{cl}
		h(x) = & x^5 + 6 x^4 + 
		(10+ a_2)\, x^3 + 
		(6 a_2+ a_3)\, x^2 + \\[8pt]
		& \displaystyle{\frac{1}{2} \left(a_2^2+29 a_2+12 a_3-20 +2 a_4\right) x + }\\[14pt]
		& \left(3 \,a_2^2+ a_2 a_3+27 a_2+16 a_3+6 a_4-12 +a_5 \right).
	\end{array}
	$$
	
	\noindent The fourth derivative
	$h^{(4)}(x)=  24 (6 + 5 x)$ has
	root $\alpha=-6/5$, and the third derivative is
	$$h^{(3)}(x)= 
	6 ( 10 x^2 + 24 x) +
	6 (10 + a_2)\,.$$
	Thus $T_2(x)=6 ( 10 x^2 + 24 x)$
	and $T_2(-6/5)=-432/5$. Hence, we want 
	$$6 (10 + a_2) \leq 432/5 = 86.4$$
	which amounts to $a_2\leq 4$.
	At this point, our list of partial 2-length candidates $[a_1,a_2]$ is given by  $[9,0]$, $[9,1]$, $[9,2]$, $[9,3]$, and $[9,4]$.
	Keep going on the procedure, at some point we get the partial candidate $[9,0,0,2]$. 
	A priori, since we know that
	$N_5 = a_1 + 5 a_5 \leq 2^5+1 + 2 g \sqrt{2^5}
	= 89.5685$, we must have $a_5\leq 16$.
	But our strategy performs better. The real Weil polynomial attached to the sequence $[9,0,0,2,a_5]$ is
	$$
	h(x) =x^5+6 x^4+10 x^3-8 x+a_5 \,.
	$$
	In the figure below, the elephant silhouette corresponds to the plot of the polynomial $T_5(x)=x^5+6 x^4+10 x^3-8 x $ and it suggests that the possible values of $a_5$ (if any) are very limited once we have obtained the previous values $[a_1,a_2,a_3,a_4]$.
	Indeed, one has that the unique polynomial $h(x)$
	obtained as a translation by positive integers from $T_5(x)$ that has the five reals roots in $[-2\sqrt{2},2\sqrt{2}]$ is achieved by $a_5=0$. 
	
	\begin{figure}[h]
		\centering
		\includegraphics[width=0.65\textwidth]{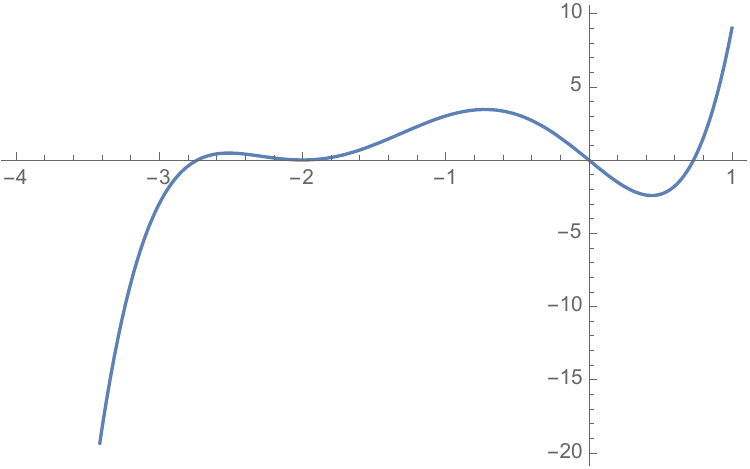}
		\label{fig:my_label}
		\caption{The real Weil polynomial
			$
			h(x) =x^5+6 x^4+10 x^3-8 x \,.
			$}
	\end{figure}

	This example corresponds 
	to the DS-curve of genus $5$ over $\mathbb{F}_2$
	with $[a_1,a_2,a_3,a_4,a_5]=[9,0,0,2,0]$
	given by the affine equation
	$$
	y^4+(x^2+x+1)y^2+(x^2+x)y+x^7+x^3 = 0 \,.
	$$
	The corresponding Frobenius polynomial is: 
	$$P(t)=32 t^{10}+96 t^9+160 t^8+192 t^7+184 t^6+144 t^5+92 t^4+48 t^3+20 t^2+6 t+1 \,.$$
	
	The above tree-type algorithm can be useful to speed the searching for DS-curves since one is looking for curves with certain number of places $a_d$ equals zero.  Regarding the procedure to compute all of the Frobenius polynomials with a given value of $a_1$, we must mention the earlier
 works of Smith \cite{Smi84} that uses this basic technique to enumerate all monic polynomials in $\Z[x]$ with a given trace and with all roots positive, and the idea goes back at least to Robinson \cite{Rob64}. Also, Lauter \cite{Lau00} spells out the procedure in the context of enumerating possible Frobenius polynomials for a curve over a given finite field with a given number of points.
	
	\begin{prop}\label{prop2.5}
		Let $C/\F_q$ be a curve of genus $g\geq 1$. Then one has $C(\F_q)=C(\F_{q^m})$ for some $m>1$ if and only if for every integer $d$ with $1<d\mid m$ it holds $a_d=0$.
	\end{prop}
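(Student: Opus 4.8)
The plan is to reduce the set-theoretic statement to an elementary statement about the non-negative integers $a_d$, using only the relation $N_m=\sum_{d\mid m} d\,a_d$ already recorded in the introduction. The geometric input is minimal: since $\F_q\subseteq\F_{q^m}$, every $\F_q$-rational point of $C$ is also $\F_{q^m}$-rational, so $C(\F_q)\subseteq C(\F_{q^m})$. Both sets are finite (as $C$ is a projective curve over a finite field), hence $C(\F_q)=C(\F_{q^m})$ holds if and only if $N_1=N_m$, where $N_i=\#C(\F_{q^i})$. This equivalence is the only place where anything beyond bookkeeping is used.

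Next I would unwind the two point counts via $N_m=\sum_{d\mid m} d\,a_d$. The only divisor of $1$ is $1$ itself, so $N_1=a_1$. Splitting the $d=1$ term off the sum for $N_m$ gives $N_m=a_1+\sum_{1<d\mid m} d\,a_d$. Therefore $N_1=N_m$ is equivalent to $\sum_{1<d\mid m} d\,a_d=0$. Since each $a_d$ is a non-negative integer (it is a number of closed points) and every coefficient $d$ occurring in this sum satisfies $d\geq 2>0$, the sum is a sum of non-negative terms; it vanishes if and only if every term vanishes, i.e. if and only if $a_d=0$ for all $d$ with $1<d\mid m$. Reading this chain of equivalences in both directions proves the proposition.

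There is no real obstacle here. The one point deserving a moment's care is the passage from equality of sets to equality of cardinalities, which is licensed precisely because $C(\F_q)\subseteq C(\F_{q^m})$ and both sides are finite; note that neither the Weil bounds nor the hypothesis $g\geq 1$ is needed for this particular equivalence, although keeping $g\geq 1$ is consistent with the running conventions of the paper.
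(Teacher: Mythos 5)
Your proof is correct and follows exactly the paper's argument: the paper's own proof is the one-line observation that $N_m=\sum_{d\mid m}d\,a_d=a_1=N_1$ together with $a_d\geq 0$, which is precisely your decomposition of the sum into the $d=1$ term plus non-negative contributions. Your added remark that set equality reduces to cardinality equality via the inclusion $C(\F_q)\subseteq C(\F_{q^m})$ is a point the paper leaves implicit, but it is the same route.
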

	
	\begin{proof}
		It follows directly from $a_d\geq 0$ and the fact
		$
		N_m = \sum_{d\mid m} d \, a_d =a_1 = N_1 $.
	\end{proof}
	
	As a final remark in this section, we show the existence of curves with some $a_d=0$ that are not DS-curves. For example, consider the
	hyperelliptic curve of genus $g=6$ over the finite field of size $q=2$ defined by 
	$$y^2 + (x^6 + x^5 + x^4 + x^3 + x^2 + x + 1)\,y = x^{13} + x^5 + x + 1\,.$$
	Its 
	Frobenius polynomial is
	$$
	P(t)=
	(t^2 - t + 2)  (t^2 + t + 2)  (t^4 - t^3 - t^2 - 2t + 4)  (t^4 + t^3 - t^2 + 2t + 4)\,.
	$$
	The sequence of number of points over $\F_{2^m}$, for $m=1,..,16$ is
	$$
	[N_1,N_2,\dots,N_{16}]=[ 3, 5, 9, 17, 33, 11, 129, 257, 513, 1025, 2049, 4379, 8193, 16385, 32769,65537  ]\,,
	$$
	and the sequence of number of places for $m=1,..,16$ is
	$$
	[a_1,a_2,\dots,a_{16}]=[ 3, 1, 2, 3, 6, 0, 18, 30, 56, 99, 186, 363, 630, 1161, 2182 ,4080]\,.
	$$
	By using Proposition \ref{interval}, one can show that this curve is not a DS-curve.
	
	\section{Low genus curves}
	
	The following tables include the set of admissible pairs $(q,m)$ for which a 
	DS-curve for $\F_{q^m}/\F_{q}$ 
	of genus $g\leq 5$ can exist.
	These values are obtained via Proposition \ref{interval}.
	
	\label{admi}
	$$
	\begin{array}
		{|c|l|}
		\multicolumn{2}{c}{g=1}\\[3pt]
		\hline
		q & m \\
		\hline
		2 & 2, 3\\
		3 & 2 \\
		4 & 2 \\
		\hline
	\end{array}
	\qquad \qquad
	\begin{array}
		{|c|l|}
		\multicolumn{2}{c}{g=2}\\[3pt]
		\hline
		q & m \\
		\hline
		2 & 2, 3, 4\\
		3 & 2,3 \\
		4 & 2 \\
		5 & 2 \\
		\hline
	\end{array}
	\qquad \qquad
	\begin{array}
		{|c|l|}
		\multicolumn{2}{c}{g=3}\\[3pt]
		\hline
		q & m \\
		\hline
		2 & 2, 3, 4,5\\
		3 & 2,3 \\
		4 & 2,3 \\
		5 & 2 \\
		7 & 2 \\
		8 & 2 \\
		9 & 2 \\
		\hline
	\end{array}
	$$
	
	\vskip 0.35truecm
	
	$$
	\begin{array}
		{|c|l|}
		\multicolumn{2}{c}{g=4}\\[3pt]
		\hline
		q & m \\
		\hline
		2 & 2, 3, 4, 5, 6\\
		3 & 2, 3, 4 \\
		4 & 2, 3 \\
		5 & 2 \\
		7 & 2 \\
		8 & 2 \\
		9 & 2 \\
		11 & 2 \\
		\hline
	\end{array}
	\qquad \qquad
	\begin{array}
		{|c|l|}
		\multicolumn{2}{c}{g=5}\\[3pt]
		\hline
		q & m \\
		\hline
		2 & 2, 3, 4, 5, 6\\
		3 & 2,3, 4 \\
		4 & 2,3 \\
		5 & 2, 3 \\
		7 & 2 \\
		8 & 2 \\
		9 & 2 \\
		11 & 2 \\
		13 & 2 \\
		\hline
	\end{array}
	$$
	We do not claim that 
	all values in the above tables are necessarily attained by some DS-curve. 
	For instance, this is the case for
	$g=2$ and the admissible pair $(4,2)$ as we shall see.
	
	\subsection{Genus 1}
	\label{genus1} 
	
	The case of elliptic curves is by far the easiest. In the following table (and successive), 
	the first column displays the admissible pairs $(q,m)$ relative to~$g$ for which there exist DS-curves; the second column shows 
	defining equations of (representatives of the isomorphism classes of) DS-curves for $\F_{q^m}/\F_q$.
	The third column indicates the  number of  points
	$N=\# C(\F_q)=\# C(\F_{q^m})$.
	
	\begin{prop}
		The following table displays the set of all (isomorphism classes of) genus one DS-curves.
		$$
		\begin{array}{|l|l|c|}
			\hline
			(q,m)   &  \multicolumn{1}{|c|}{C}  & N \\
			\hline
			(2,2) & y^2+y = x^3+x  & 5 \\
			\hline
			(2,3) & y^2+y=x^3+1  & 4 \\
			& y^2+y = x^3+x  & 5 \\
			\thickhline
			(3,2) & y^2 = x^3+2\,x+1  & 7 \\
			\thickhline
			(4,2) & y^2+y = x^3 &  9 \\
			\hline
		\end{array}
		$$
	\end{prop}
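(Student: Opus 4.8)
The plan is to run through the four admissible pairs $(q,m)$ for genus one produced by Proposition~\ref{interval} --- namely $(2,2)$, $(2,3)$, $(3,2)$ and $(4,2)$ --- and, for each, to classify all elliptic curves $E/\F_q$ up to $\F_q$-isomorphism with $E(\F_{q^m})=E(\F_q)$. Since $m$ is prime in every one of these cases, Proposition~\ref{prop2.5} says the Diophantine stability condition is just $N_m=N_1$. The first step is to turn this into a condition on the trace of Frobenius $a=q+1-N_1$: writing $P(t)=1-at+qt^2=(1-\pi t)(1-\overline\pi t)$, the Weil bound gives $|a|\le\lfloor 2\sqrt q\rfloor$, and $N_m=q^m+1-s_m$ where $s_m=\pi^m+\overline\pi^m\in\Z$ obeys the Lucas recursion $s_0=2$, $s_1=a$, $s_m=a\,s_{m-1}-q\,s_{m-2}$. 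Thus $s_m$ is a fixed polynomial in $(a,q)$, and $E$ is a DS-curve for $\F_{q^m}/\F_q$ exactly when $s_m(a,q)-a=q^m-q$.

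Working this out, for $m=2$ one has $s_2=a^2-2q$, so the equation becomes $(q+1-a)(q+1+a)=q+1-a$; since $q+1-a=N_1\ge1$, this is equivalent to $a=-q$, i.e. $N_1=2q+1$, which is compatible with $|a|\le2\sqrt q$ precisely when $q\le4$ --- explaining why only $q\in\{2,3,4\}$ arise. For $m=3$ (so $q=2$) one has $s_3=a^3-3qa$, and the equation is $a^3-7a-6=(a+1)(a+2)(a-3)=0$; the bound $|a|\le2$ leaves $a\in\{-2,-1\}$, i.e. $N_1\in\{5,4\}$. Hence the curves we are after are: over $\F_2$ those with $a=-2$ for the pair $(2,2)$, and those with $a\in\{-2,-1\}$ for $(2,3)$; over $\F_3$ those with $a=-3$ for $(3,2)$; and over $\F_4$ those with $a=-4$ for $(4,2)$.

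It remains to determine, in each case, all isomorphism classes of elliptic curves over $\F_q$ that realize the prescribed trace, and to check they have the point counts in the table. Over $\F_2$ there are exactly five isomorphism classes of elliptic curves, one for each value $a\in\{-2,-1,0,1,2\}$, which settles the rows $(2,2)$ and $(2,3)$; over $\F_3$ and $\F_4$ one argues in the same spirit and exhibits $y^2=x^3+2x+1$ (trace $-3$ over $\F_3$, with $N=7$) and $y^2+y=x^3$ (trace $-4$ over $\F_4$, with $N=9$). Each displayed equation is then checked, by a direct count, to realize the prescribed trace and hence to have the number of points recorded in the table.

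The one genuinely non-mechanical point, in my view, is the \emph{completeness} of the list of isomorphism classes for each trace value: every trace that occurs ($a=-2$ over $\F_2$, $a=-3$ over $\F_3$, $a=-4=-2\sqrt q$ over $\F_4$, and the auxiliary $a=-1$ for the pair $(2,3)$) is supersingular, and in characteristic $2$ and $3$ the supersingular $j$-invariant carries extra automorphisms, so \emph{a priori} a single trace could be realized by several quadratic or higher (cubic, sextic) twists. Ruling out further classes can be done either by appealing to the explicit classification of elliptic curves over $\F_2$, $\F_3$ and $\F_4$ (for instance via $j$-invariants together with the Waterhouse--Schoof twist counts), or --- given how small these fields are --- by simply enumerating reduced Weierstrass equations up to $\F_q$-isomorphism and computing point counts directly. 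Once that bookkeeping is carried out, the proposition follows.
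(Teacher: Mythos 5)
Your proposal is correct and takes essentially the same route as the paper, whose proof is simply ``inspection of the isomorphism classes of elliptic curves over $\F_q$, which can be listed easily''; your preliminary reduction of Diophantine stability to the trace equations $a=-q$ (for $m=2$, forcing $q\le 4$) and $a^3-7a-6=0$ (for $m=3$, $q=2$) is just a cleaner way of organizing that same inspection, and your worry about completeness per trace value is exactly what the appeal to the full (tiny) isomorphism lists over $\F_2$, $\F_3$, $\F_4$ resolves. One remark your promised ``direct count'' would surface: $y^2+y=x^3+1$ actually has $3$ points over $\F_2$ (trace $0$), so the $(2,3)$-row curve with $N=4$ should be a model such as $y^2+xy=x^3+1$ --- a typo in the table, not a flaw in your argument.
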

	
	\begin{proof} 
		For every admissible pair $(q,m)$ one proceeds by inspection of the isomorphism classes of
		elliptic curves over $\F_q$ which can be listed easily.
	\end{proof}
	
	\begin{remark}
		Notice that the elliptic curve 
		$C: y^2+y = x^3$ in the last row
		is in fact defined over $\F_2$ (hence also over $\F_4$) and
		satisfies $\#C(\F_4)= \#C(\F_{16}) = 9$
		but $\#C(\F_2)=3$.
	\end{remark}
	
	\subsection{Genus 2}
	\label{genus2} 
	
	As for genus-2 curves over finite fields $\F_q$, the list of isomorphism classes 
	is practicable for small values of $q$ so that we can get easily the sublist of DS-curves for that genus.

	\begin{prop}
		The following table displays all the isomorphism classes of DS-curves of genus $g=2$ along with the admissible pairs $(q,m)$.
		$$
		\begin{array}{|l|l|c|}
			\hline
			(q,m)   &  \multicolumn{1}{|c|}{C} & N \\
			\hline
			(2,2) & y^2 +(x^2+x)\,  y= x^5+x^3+x^2+x  & 3 \\
			& y^2 +x\,  y= x^5+x  & 4 \\
			& y^2 + y= x^5+x^3  & 5 \\
			& y^2 +(x^3+x+1)\,  y= x^5+x^4+x^3+x  & 6 \\
			\hline
			(2,3) & y^2 +y= x^5+x^3+1  & 1  \\
			& y^2 +x\, y= x^5+x^2+x  & 2  \\
			& y^2 +y= x^5+x^4  & 5  \\
			\thickhline
			(3,2) & y^2= x^5 + 2\, x^4 + 2\,x^3 + 2\,x  &  5 \\
			\hline
			(3,3) & y^2= x^6 + x^4 + x^2 + 1  &  8 \\
			\thickhline
			(5,2) & y^2= x^5+4\,x   &  6 \\
			\hline
		\end{array}
		$$
	\end{prop}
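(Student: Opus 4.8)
The plan is to run through the admissible pairs $(q,m)$ one by one and, for each, enumerate the isomorphism classes of genus-$2$ curves over $\F_q$, compute their zeta functions, and test the criterion of Proposition~\ref{prop2.5}. By Proposition~\ref{interval} (see the genus-$2$ table above) the only pairs to examine are $(2,2)$, $(2,3)$, $(2,4)$, $(3,2)$, $(3,3)$, $(4,2)$ and $(5,2)$. By Proposition~\ref{prop2.5} a genus-$2$ curve $C/\F_q$ is a DS-curve for $\F_{q^m}/\F_q$ precisely when $a_d=0$ for every $d$ with $1<d\mid m$; for $m\in\{2,3\}$ this is just $a_m=0$, i.e.\ $\#C(\F_{q^m})=\#C(\F_q)$, and for $m=4$ it is $a_2=a_4=0$, which (both being non-negative) amounts to $\#C(\F_{q^4})=\#C(\F_q)$. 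So in every case the test reduces to comparing two point counts.

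For the enumeration I would use that every genus-$2$ curve is hyperelliptic with a model $y^2+h(x)\,y=f(x)$, $h,f\in\F_q[x]$, $\deg h\le 3$, $\deg f\le 6$, under the usual non-singularity conditions: for $q$ odd one may take $h=0$ and $f$ separable of degree $5$ or $6$, whereas for $q=2$ and $q=4$ the Artin--Schreier term $h(x)\,y$ is essential and must be kept. The set of isomorphism classes is finite and short enough to be listed directly for these fields; for each representative one computes $N_1,N_2$ (and $N_3,N_4$ when $q=2$) by brute-force counting or via the Frobenius polynomial, and retains those passing the relevant comparison. Reducing the survivors to normal models yields exactly the equations in the table. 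In particular, one finds that over $\F_2$ the curves with $a_2=0$ are precisely the four appearing in the $(2,2)$ rows and that for each of them $a_4\neq 0$, so the pair $(2,4)$ contributes nothing.

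The case needing extra care is $(4,2)$: one must show there is \emph{no} genus-$2$ curve over $\F_4$ with $a_2=0$, i.e.\ with $\#C(\F_{16})=\#C(\F_4)$. Writing the Weil polynomial as $L(t)=t^4+c_1t^3+c_2t^2+4c_1t+16$, the equality $N_1=N_2$ forces $2c_2=c_1^2+c_1-12$; combining this with the Weil constraints $|c_1|\le 8$ and $4|c_1|-8\le c_2\le \tfrac14 c_1^2+8$ leaves the single candidate $(c_1,c_2)=(0,-6)$, namely $L(t)=t^4-6t^2+16$ with $N_1=N_2=5$. To discard it I would appeal to the Honda--Tate/Manin criterion: the $2$-adic Newton polygon of $t^4-6t^2+16$ has slopes $\tfrac14,\tfrac14,\tfrac34,\tfrac34$, which is not admissible for an abelian surface (a slope with denominator $4$ must occur with horizontal length divisible by $4$), so $t^4-6t^2+16$ is not the Weil polynomial of any abelian surface over $\F_4$, hence of no curve. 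Alternatively, in the spirit of the rest of the paper, one enumerates the genus-$2$ curves over $\F_4$ directly and checks none has $a_2=0$. Either way $(4,2)$ yields no DS-curve, so the table is complete.

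The hard part is exactly this last point. For the five non-empty rows the argument is a finite mechanical verification on a short explicit list, the only mild subtlety being the correct characteristic-$2$ enumeration (not dropping the $h(x)\,y$ term for $q=2$ and $q=4$); the genuinely non-routine step is the non-existence claim for $(4,2)$, the one place where one leaves pure bookkeeping and invokes either the Honda--Tate description of admissible Newton polygons or the Serre--Howe obstructions (cf.\ \cite{Ser20}) separating isogeny classes of abelian surfaces that contain a Jacobian from those that do not.
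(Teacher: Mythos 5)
Your proposal is correct and, for the bulk of the statement, follows the same route as the paper: the paper's proof is exactly the exhaustive inspection you describe (build the isomorphism classes of genus-$2$ curves over the relevant $\F_q$, or pull them from Sutherland's database, and keep those with $a_d=0$ for $1<d\mid m$), and it likewise disposes of the pairs $(2,4)$ and $(4,2)$ by inspection of the census, relegated to a remark following the table. Where you genuinely diverge is the $(4,2)$ case. The paper simply reports that among all genus-$2$ curves over $\F_4$ the minimal value of $\#C(\F_{16})-\#C(\F_4)$ is $2$, i.e.\ it settles the question by brute force; you instead first pin down the unique candidate Weil polynomial compatible with $N_1=N_2$ (your linear relation $2c_2=c_1^2+c_1-12$ combined with R\"uck's inequalities does isolate $L(t)=t^4-6t^2+16$, with $N=5$) and then kill it by Honda--Tate: the $4$-adic Newton polygon has slopes $\tfrac14,\tfrac14,\tfrac34,\tfrac34$ with a non-integral breakpoint, so this is not the characteristic polynomial of Frobenius of any abelian surface, hence of no Jacobian. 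I checked this computation and it is correct. Your argument buys a machine-independent, fully checkable proof of the one non-trivial exclusion (and is in the spirit of the Serre--Howe obstruction machinery the paper invokes for genus $3$ and $4$), at the cost of importing Honda--Tate; the paper's version buys uniformity (one census, one filter) at the cost of an unverifiable-by-hand enumeration. One small point of friction: the paper's remark asserts that the intersection of the Hasse--Weil intervals for $N_1$ and $N_2$ over $\F_4$ is $[7,10]$, which does not match your (correct) finding that the unique surviving candidate has $N=5$; your sharper candidate analysis supersedes that remark rather than contradicting your own argument.
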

	
	\begin{proof}
		To get the table, 
		one can build first the list of isomorphism classes of hyperelliptic curves of genus $g=2$ defined over $\F_q$ for the needed values of $q$ fairly easy and then select the DS-curves in the list. Alternatively, one can use directly the database on isomorphism classes of curves of small genus over finite fields elaborated by Sutherland \cite{sutherlandwebpage}.  
	\end{proof}
	
	\begin{remark}
		According to the tables 
		at the begging of the present section, a priori the cases 
		$(q,m)=(2,4)$ and $(4,2)$ have a chance to appear for genus-$2$ DS-curves. Both cases are excluded by inspection of the 
		representatives of isomorphism
		classes of curves.
		For instance, in the case $(q,m)=(4,2)$,
		it turns out that
		the intersection of the Hasse-Weil intervals for $N_1$ and $N_2$ is $[7,10]$, but 
		there are not genus-$2$ curves $C$ over $\F_4$ with
		$N=\# C(\F_4)=\# C(\F_{16})$ for $N=7$, $8$, $9$ or $10$.
		The minimal 
		difference
		$\# C(\F_{16})-\# C(\F_{4})$ among the genus-$2$ curves defined over $\F_4$
		turns out to be $2$ and it is attained by the curve
		$$
		y^2 +(x^2+x) \, y= 
		\alpha \, (x^5 + x^3 + x^2 +x) \,.
		$$
		In the equation above and hereafter, 
		for non-prime fields we let $\alpha$ denote a
		Conway generator of the finite field $\F_q=\F_p(\alpha)$; that is, $\alpha$ is a root of the Conway polynomial defining the extension $\F_q/\F_p$ where $p$ is the prime characteristic.
	\end{remark}

	\subsection{Genus 3}
	\label{genus3} 
	
	For genus-3 curves, things begin to get more intricate. Still we can make use of Sutherland's database. However, the database does not cover yet all the isomorphism classes of genus-$3$ curves defined over the finite fields for all the cases with potential presence of Diophantine stability.
	To be more precise, 
	from Sutherland's database,
	we lack the following isomorphism classes of genus-$3$ curves:
	
	\begin{itemize}
		\item Hyperelliptic curves over $\mathbb{F}_4$;
		\item Hyperelliptic curves over $\mathbb{F}_8$;
		\item Non-hyperelliptic curves over $\mathbb{F}_7$;
		
		
		\item Non-hyperelliptic curves over $\mathbb{F}_8$.
	\end{itemize}
	
	Luckily, we shall be able 
	to either justify the absence of DS-curves 
	or to find the ones with Diophantine stability in the isomorphism classes  under-construction in Sutherland's database. Hence, we can (and do) provide the complete list of DS-curves of genus $3$.

	\begin{prop}
		\label{tablesgenus3}
		The following tables display all the genus-3 DS-curves over finite fields.
		We first display the hyperelliptic DS-curves followed by the plane quartics defining equations for the non-hyperelliptic DS-curves.
	\end{prop}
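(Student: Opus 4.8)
The plan is to march through the finite list of admissible pairs $(q,m)$ for genus $3$ displayed in the table above — which comes from Proposition~\ref{interval} — and for each pair decide which isomorphism classes of genus-$3$ curves over $\F_q$ are Diophantine stable for $\F_{q^m}/\F_q$. By Proposition~\ref{prop2.5}, a curve $C/\F_q$ is DS for $\F_{q^m}/\F_q$ if and only if $a_d=0$ for every $d$ with $1<d\mid m$; so, pair by pair, the task reduces to scanning isomorphism classes of genus-$3$ curves for the vanishing of the relevant numbers of closed points. Concretely one needs $a_2=0$ for the pairs with $m=2$, $a_3=0$ for those with $m=3$, $a_2=a_4=0$ for $m=4$, and $a_5=0$ for $m=5$.

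For the fields where Sutherland's database \cite{sutherlandwebpage} already contains the complete list of isomorphism classes of genus-$3$ curves over $\F_q$ — hyperelliptic and non-hyperelliptic over $\F_2,\F_3,\F_5,\F_9$, together with the hyperelliptic curves over $\F_7$ — I would simply enumerate the curves, read off each Frobenius polynomial, extract $[a_1,a_2,a_3,\dots]$ up to the degree required by the corresponding $m$, and retain exactly those curves meeting the DS condition. This produces all table entries over those fields, both the hyperelliptic ones and the plane quartics.

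The four families of genus-$3$ curves not yet covered by the database — hyperelliptic over $\F_4$ and over $\F_8$, and non-hyperelliptic over $\F_7$ and over $\F_8$ — are treated separately. For each of these I would first generate, by the recursive tree-type procedure described in the previous section (equivalently, via the Weil--Serre explicit formulas together with Proposition~\ref{recursionH}), the finite list of candidate real Weil polynomials $h(x)$ — equivalently, candidate sequences $[a_1,a_2,a_3]$ — compatible with the DS requirement $a_d=0$ for the relevant divisors $d$ of $m$. Next I would discard from this list the polynomials that cannot be the Weil polynomial of any curve, using the partial obstructions of Serre and Howe \cite{Ser20}. Finally, for every surviving candidate I would either exhibit an explicit model realizing that zeta function — a hyperelliptic model $y^2+h(x)\,y=f(x)$ of genus $3$ in the first two cases, a smooth plane quartic in the last two — thereby certifying a DS-curve, or, when the list is emptied, conclude that no DS-curve exists for that $(q,m)$.

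The main obstacle I anticipate is exactly this last step for the smooth plane quartics, most acutely over $\F_8$: here there is no ready-made enumeration of isomorphism classes, so one must either carry out a direct computer-assisted classification of the quartics with the required vanishing $a_d$, or arrange that the Weil-polynomial constraints plus the Serre--Howe obstructions already cut the candidate list down to a short, hand-checkable set that can be matched against explicitly written quartics. Establishing completeness — that no isomorphism class with the prescribed $a_d=0$ has been overlooked, not merely that the exhibited curves are DS — is where the real effort lies; once the candidate lists are pinned down, checking that a given model has the asserted number of points over $\F_q$ and $\F_{q^m}$ is routine.
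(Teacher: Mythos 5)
Your overall architecture matches the paper's: reduce to the finite list of admissible pairs via Proposition~\ref{interval}, apply Proposition~\ref{prop2.5} to turn the DS condition into vanishing of the relevant $a_d$, read the answer off Sutherland's database where it is complete, and handle the four missing families by generating candidate Weil polynomials with the method of the previous section. You also correctly identify where the difficulty lies. However, two genuine gaps remain in how you propose to close out the uncovered cases.

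First, the candidate-generation step does not empty the list in all four missing families: it is empty for hyperelliptic and non-hyperelliptic curves over $\F_8$ (so your worry about $\F_8$ quartics evaporates), but for non-hyperelliptic curves over $\F_7$ with $(q,m)=(7,2)$ exactly one candidate survives, $L(t)=(t^2+5t+7)(t^4-13t^2+49)$, and it is \emph{not} killed by invoking ``the partial obstructions of Serre and Howe'' as a black box. The paper needs a bespoke argument here: since the real Weil polynomial factors as $(x+5)(x^2-27)$ with resultant $-2$, Howe's Theorems~1 and~2 force any curve with this Weil polynomial to be a double cover of a genus-$2$ or genus-$1$ curve with prescribed Weil polynomial; the genus-$2$ option is ruled out by nonexistence, and the genus-$1$ option is ruled out by combining the DS hypothesis $\#C(\F_7)=\#C(\F_{49})=13$ (which forbids inert points, giving $S=0$, $R=13$) with the Riemann--Hurwitz bound of $4$ ramification points. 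Your plan as written has no mechanism that would produce this contradiction, and a ``direct computer-assisted classification'' of plane quartics over $\F_7$ is precisely the computation the database lacks.

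Second, for the hyperelliptic case over $\F_4$ the candidate list is nonempty and curves do exist; your proposal to ``exhibit an explicit model realizing that zeta function'' certifies existence but not completeness of the \emph{table}, since the proposition enumerates isomorphism classes and several non-isomorphic curves can share a Weil polynomial. The paper resolves this by building the full census of the $2162$ isomorphism classes of hyperelliptic genus-$3$ curves over $\F_4$ (adapting Xarles's code) and finding exactly three DS-curves. Without such a census, matching candidates to models leaves the claimed exhaustiveness of the list unproved.
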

	
	$$
	\begin{array}{|l|l|c|}
		\hline
		(q,m)   &  \multicolumn{1}{|c|}{C}  & N \\
		\hline
		(2,2)  & y^2 +(x+x^2)\, y  = 
		x^7 + x^6 + x^5  + x  & 3 \\
		& y^2 +(x+x^2)\, y = 
		x^7 + x^6 + x^5 + x^4  + x^2  + x   & 3 \\
		& y^2+x\, y =  x^7 + x^6 + x^2  + x   & 4 \\ 
		& y^2+x\,y = x^7 + x^6 + x^5  + x   & 4 \\ 
		& y^2+(x^2+x^4)\,y = x^5 + x^4 + x^3  + x   & 4 \\ 
		& y^2+y = x^7 + x^6   & 5 \\ 
		& y^2 + (x^4+x^2+x+1)\,y = x^7 + x^5  + x^4 + x^3   & 5 \\ 
		& y^2+(x^4+x^2+x)\,y= x^6  + x^3  + x^2   + x    & 5 \\ 
		& y^2+(x^4+x+1)\,y= 
		x^7 + x^5  + x^4 + x^3 + x^2  + x     & 6 \\ 
		& &   \\[-8pt]
		& x^{4} + x^{3} y + y^{4} + x^{2} y  + y^{3}  + x  + 1 &  0 \\         
		& x^{4} + x^{3} y + y^{4} + x^{3}  + x    &  1 \\
		& x^{4} + x y^{3} + y^{4} + x^{3}  + x^{2} y  + x y^{2}  + x   &  1 \\
		& x^{3} y + x y^{3} + y^{4} + x^{2} y  + x    &  2
		\\
		& x^{3} y + x y^{3} + y^{4} + x y^{2}  + x    &  2
		\\
		& x^{4} + x^{3} y + x y^{3} + x^{3}  + x y^{2}  + y^{2}  + x   &  2 \\
		& x^{4} + x^{2} y^{2} + x y^{3} + x^{3} + x^{2} y  + y^{2} 
		+ x    &  2 \\
		& x^{3} y + x y^{3} + y^{4} + x^{3}  + x    &  3
		\\
		& x^{4} + x^{3} y + x^{3}  + y^{3}  + x   &  3
		\\
		& x^{3} y + x^{2} y^{2} + x^{2} y  + y^{3}  + x   
		&  4 \\
		& x^{3} y + x^{2} y^{2} + x^{3}  + y^{3}  + y^{2}  + x 
		&  7 \\
		\hline
	\end{array}
	$$
	
	$$
	\begin{array}{|l|l|c|}
		\hline
		(q,m)   &  \multicolumn{1}{|c|}{C}  & N \\
		\hline
		(2,3) & y^2+( x^{2} + x + 1 )\,y= x^{7} + x^{6} + x^{5} + x^{4} + x^{3} + x + 1  & 1\\
		& y^2+( x^{4} + x + 1 )\,y= x^{8} + x^{5} + x + 1  & 2\\
		& y^2+( x^{4} + x )\,y= x^{8} + x^{7} + x^{5} + x   & 2\\
		& y^2+ x \,y= x^{7} + x^{2} + x   & 2\\
		& y^2+( x^{4} + x^{2} )\,y= x^{8} + x^{4} + x^{2} + x   & 2\\
		& y^2+( x^{4} + x^{2} + 1 )\,y= x^{2} + x + 1   & 2\\
		& y^2+( x^{4} + x + 1 )\,y= x^{6} + x^{5} + x^{4} + x^{3} + 1   & 2\\
		& y^2+( x^{2} + x )\,y= x^{7} + x^{6} + x^{5} + x   & 3\\
		& y^2+y= x^{7}  & 3\\
		& y^2+( x^{2} + x + 1 )\,y= x^{7} + x^{6} + x^{5} + x^{2} + x + 1  & 3\\
		& y^2+y= x^{7} + x^{6} + x^{4} + 1  & 3\\
		& y^2+x \,y= x^{7} + x^{6} + x^{5} + x  & 4\\
		& y^2+( x^{4} + x + 1 )\,y= x^{6} + x^{4} + x^{3} + x^{2} + x + 1  & 4\\
		& y^2+( x^{3} + 1 )\,y= x^{7} + x^{4}  & 4\\
		& y^2+( x^{4} + x^{2} )\,y= x^{4} + x  & 4\\
		&  y^2+( x^{4} + x + 1 )\,y= x^{8} + x^{4} + x^{2} + x  & 4\\
		&  y^2+( x^{4} + x^{2} + 1 )\,y= x^{4} + x  & 6\\
		& y^2+( x^{4} + x + 1 )\,y= x^{6} + x^{5} + x^{3} + x  & 6\\
		&  &     \\[-8pt]
		& x^{4} + x^{2} y^{2} + y^{4} + x^{2} y  + x y^{2}  + x   &  3 \\
		& x^{2} y^{2} + x^{3}  + y^{3}  + x^{2}  + x y  + x   &  4 \\
		& x^{4} + x y^{3} + x^{2} y  + y^{2}  + x &  4 \\
		& x^{3} y + y^{4} + x^{3}  + x    &  5 \\
		& x^{2} y^{2} + y^{4} + x^{3}  + x^{2} y  + x y^{2}  + x   &  5 \\
		\hline
	\end{array}
	$$
	
	$$
	\begin{array}{|l|l|c|c|}
		\hline
		(q,m)   &  \multicolumn{1}{|c|}{C}  & N \\
		\hline   
		(2,4) & x^{3} y + x^{2} y^{2} + x^{3}  + y^{3}  + y^{2}  + x  &  7 \\
		\hline
		(2,5) &  y^2+( x^{4} + x + 1 )\,y= x^{8} + x^{6} + x^{5} + x^{4} + x^{3} + x^{2}   & 4 \\
		& y^2+( x^{4} + x^{2} + 1 )\,y= x^{4} + x   & 6 \\
		&  &     \\[-8pt]
		& x^{4} + x^{2} y^{2} + y^{4} + x^{2} y + x y^{2}  + x^{2} 
		+ x y  + y^{2}  + 1   &  0 \\
		\thickhline
		(3,2) & y^2 = x^{7} + 2 x^{6} + x^{5} + x^{4} + x^{3} + 2 x^{2} + 1 & 5 \\
		& y^2 = x^{8} + 2 x^{5} + 2 x^{4} + 2 x^{2} + 2 x  & 6 \\
		& y^2= 2 x^{7} + 2 x^{4} + 2 x^{3} + 2 x^{2} + 1  & 6\\
		& y^2 = x^{7} + x^{6} + 2 x^{5} + x^{4} + x^{3} + 2 x^{2} + 2 x & 6 \\
		& y^2 = x^{7} + 2 x^{6} + 2 x^{5} + x^{3} + x^{2} + 2 x + 1  & 7 \\
		& y^2 = x^{8} + 2 x^{7} + x^{6} + 2 x^{3} + 2 x^{2} + 1  & 7 \\
		& y^2 = x^{8} + 2 x^{6} + x^{4} + 2 x^{3} + 2 x^{2} + x + 1  & 8 \\
		&  &     \\[-8pt]
		&  x^{4} - x^{2} y^{2} - y^{4} +  x^{3}  - x    & 1 \\
		&  x^{4} +  x^{2} y^{2} - y^{4} +  x^{3}  - x    &  1 \\
		&  x^{4} - x^{3} y - y^{4} +  x^{3}  - x y  - x    &  1 \\
		&  x^{4} - x^{3} y - y^{4} +  x y^{2}  - x    & 2 \\
		&  x^{4} +  x^{3} y - y^{4} +  x^{3}  - x^{2} y  +  x y^{2} - x    &  2 \\
		& x^{4} + x^{3} y + x^{2} y^{2} + y^{4} -  x^{2}  + x y  + x    &  2 \\
		&  x^{4} +  x^{3} y - x y^{3} +  x^{3}  +  x^{2} y  - y^{2} - x    &  2 \\
		&  x^{4} +  x^{3} y +  x^{2} y^{2} - x y^{3} - x^{3} - x^{2} - y^{2}  - x    &  2 \\  
		& y^{4} -  x^{3}  + x    &  4 \\
		& x^{3} y + y^{4} -  x y^{2}  + x    &  5 \\
		&  x^{2} y^{2} - y^{4} +  x^{3}  - x    &  10 \\
		\hline
		(3,3) & y^2 = 2 x^{8} + 2 x^{7} + x^{6} + 2 x^{5} + x^{4} + 2 x^{2} + x + 2 & 3 \\
		& y^2= x^{8} + 2 x^{7} + 2 x^{6} + 2 x^{5} + x^{2} + x  & 4 \\
		& y^2 = x^{7} + x^{6} + 2 x^{5} + x^{4} + x^{3} + x^{2} + 2 x & 4 \\
		& y^2= x^{8} + x^{4} + 2 x^{2} + 1  & 4 \\
		& y^2 = x^{8} + 2 x^{7} + 2 x^{5} + 2 x^{4} + x^{3} + x + 1  & 8 \\
		\thickhline
		\hline
	\end{array}
	$$
	
	$$
	\begin{array}{|l|l|c|c|}
		\hline
		(q,f)   &  \multicolumn{1}{|c|}{C}  & N \\
		\hline   
		(4,2) & y^2+( x^{2} + x + 1 )\,y= x^{7} + x^{6} + x^{5} + x^{3} + x^{2} + x  &  7 \\
		&y^2+( x^{4} + x^{2} + 1 )\,y= x^{5} + x^{2}    &  8 \\
		& y^2+( x^{4} + x^{2} + x + 1 )\,y= x^{5} + x^{3} + x^{2} + x   &  9 \\
		&  &     \\[-8pt] 
		&  \alpha \, x^4+\alpha \, x^3\, y+\alpha \, x^2\, y^2+x+y^4  & 1 \\
		& \alpha \, x^4+\alpha^2  \,x^3 \, y+\alpha^2  \,x^2  \,y^2+x +y^4  & 1 \\
		& \alpha^2 \, x^4+\alpha \, x^3 y+\alpha \,x \,y^3+x +y^4  & 2 \\
		& \alpha \, x^4+\alpha^2 \,x^3 \,y+\alpha^2 \, x \, y^3+x+y^4  & 2 \\
		& \alpha^2\, x^4+x^3\, y+\alpha \, x^2 \,y^2+\alpha^2 \,x^2 \,y +x \,y^3+\alpha \, x \,y^2 +x \, y +x +y^2   & 2 \\
		& x^4+x^3+\alpha \, x^2\, y^2+\alpha \, x^2\, y+x\, y^3+\alpha^2\, x\, y^2+x\, y+x+y^2 & 2 \\
		& x^{3} y + x^{2} y^{2} + x^{3}  + y^{3}  + y^{2}  + x  &  7 \\
		& x^{4} + x y^{3} + x^{2} y  + y^{2}  + x   &  14 \\
		& x^{4} + x^{2} y^{2} + y^{4} + x^{2} y  + x y^{2}  + x^{2} 
		+ x y  + y^{2}  + 1   &  14 \\
		\thickhline
		(5,2) & y^2 = x^7+x^5+3\,x^3+x  & 10 \\
		\thickhline
		(9,2) & 
		x^4+y^4+z^4  & 28  \\
		\hline
	\end{array}
	$$
	
	\begin{proof}
		The proof follows the same arguments as before by inspection but, in addition, we must analyze the cases
		not covered in Sutherland's database. To do so, we follow the directions in Section~\ref{intro} to obtain the list of all candidate Weil polynomials
		for (potential) DS-curves
		of genus $g=3$ attached to the admissible pairs $(q,m)$. If the list is empty, we are done. It happens to be so in all the under-construction cases of Sutherland's database, except for the
		genus-$3$ cases: hyperelliptic with $(q,m)=(4,2)$, and non-hyperlliptic with  $(q,m)=(7,2)$.
		
		As for the case hyperelliptic with $(q,m)=(4,2)$, we proceed to build the census of all (isomorphism classes of) 
		hyperelliptic curves over $\F_4$. To this end, we adapt the code provided by Xarles in \cite{Xarles_github} for hyperlliptic curves over $\F_2$. There are $2162$ isomorphism classes of hyperelliptic curves over $\F_4$, of which three are DS-curves for $\F_{4^2}/\F_{4^{\phantom{2}}}\!\!$; in fact, these three DS-curves can be defined over $\F_2$.
		
	With regard to the non-hyperelliptic case with  $(q,f)=(7,2)$, we proceed following an argument suggested to us by Howe. By using the strategy in Section  \ref{intro}, one finds that there is a unique candidate Weil polynomial, namely:
		$$
		L(t)= (t^2+5\,t+7)(t^4-13\,t^2+49)\,.
		$$
		The corresponding real Weil polynomial of turns out to be:
		$$
		h(x) = P_1(x)P_2(x) = (x+5) (x^2-27)\,.
		$$
		Recall that the roots of the real Weil polynomial $h(x)$
		are $\mu_i=\pi_i+\overline{\pi}_i$, where $\pi_i$ are the roots of $L(t)$.
		Since the resultant of $P_1(x)$ and $P_2(x)$
		equals $-2$, we can
		apply Theorem 1 and Theorem 2 in \cite{Howe}.
		Thus, if there is a genus-3 curve $C$ with the given Weil polynomial, then it must be a double cover of a 
		curve~$D$ such that either:
		\begin{itemize}
			\item[(a)] $D$ is a genus-$2$ curve with Weil polynomial $t^4 - 13 t^2 + 49$, or
			\item[(b)] $D$ is a genus-$1$ curve with Weil polynomial $t^2 + 5 t + 7$.
		\end{itemize}
		
		Case (a) does not work, because there is no genus-$2$ curve with Weil polynomial $t^4 - 13 t^2 + 49$. One can check this either by using the method in Section \ref{intro} again or by the Theorem on page~335 of \cite{Nart}.
		
		So we must be in case (b). Note that $\#D(\F_7) = 13$. We can ask how many of these rational points of $D$ split in the double cover $C \longrightarrow D$, how many ramify, and how many are inert. 
		Since we have $\#C(\F_7) = 13$ and $\#C(\F_{49}) = 13$, no rational points of $D$ are inert, so every rational point either splits or is ramified in the double cover $C\longrightarrow D$. If we let $S$ be the number of split points and $R$ be the number of ramified points, then
		$$
		\begin{array}{l@{\,=\,}l}
			S + R & \#D(\F_7) = 13 \\
			2S + R & \#C(\F_7) = 13,
		\end{array}
		$$
		so $S = 0$ and $R = 13$. But from the Riemann–Hurwitz formula, we see that only~$4$ geometric points of $D$ ramify. Thus, (b) cannot hold either.
		
		We reach to the conclusion that such quartic curve over $\F_7$ with the above Weil polynomial does not exist and this completes the classification of DS-curves of genus $3$ over finite fields.
	\end{proof}
	
	\begin{remark}
		For genus $g\geq 4$, to list the DS-curves over finite fields is an extremely laborious task. For genus $g=4$, Xarles has obtained the list of isomorphism classes of hyperellipitc curves over $\F_2$, but beyond that the numbers of isomorphism classes become large. 
    One is lead to search directly for the candidate real Weil polynomials $h(x)$ as explained in Section \ref{intro} or alternatively use \cite{Ked22}, and then apply Serre's and Howe-Lauter's criteria \cite{HL0307} to discard some candidates. See \cite{Lar21} for a list of DS $4$-genus candidate polynomials (the list has been debugged
        by Howe using the IsogenyClasses.magma \cite{How} package).
	\end{remark}

 \begin{remark}
We want to thank E.Howe for drawing our attention to notice the following fact. For every odd $q$,
one can find a hyperelliptic DS-curve of genus at most $(q^2-3)/2$. His construction goes as follows. Using Lagrange interpolation, construct a polynomial $f$ in $\F_q[x]$ of odd degree at most $q^2-1$ such that:
\begin{itemize}
    \item [(1)] for every $z$ in $\F_q$, the value $f(z)$ is a square in $\F_q$, and
   \item[(2)] for every $z$ in $\F_{q^2}$ that is not in $\F_q$, the value $f(z)$ is a nonsquare in $\F_{q^2}$.
\end{itemize}
The polynomial $f$ might have square factors; let $g$ be the polynomial obtained by dividing out all the square factors from $f$. Then $y^2 = g$ provides a hyperelliptic curve of genus at most 
$(q^2-3)/2$, and it has the same number of points over $\F_{q^2}$ as it does over $\F_q$.

This gives a DS-curve for $\F_{q^2}/\F_{q}$. 
Much simpler is to get hyperelliptic DS-curves for $\F_{q^3}/\F_{q}$, although the genus is larger: choose a nonsquare $n$ in $\F_q$, and let $C$ be the curve $y^2 = f$ with 
$f = x^{q^3} - x + n$. It is easy to check that $f$ has nonzero discriminant, and that $C$ has exactly one rational point over $\F_q$ and exactly one rational point over $\F_{q^3}$.     
 \end{remark}

	\section{Deligne-Lusztig curves}
	
	In the middle 70's, 
	Deligne and Lusztig 
	were able to give an explicit description of the irreducible representations of the  semi-simple finite 
	groups of Lie type~\cite{deligne-lusztig}. 
	These representations can be read from the $\ell$-adic cohomology of the so-called  
	Deligne-Lusztig algebraic varieties which are defined over finite fields. In this section, we make the observation that, in the one-dimensional case, Deligne-Lusztig curves turn out to be DS-curves.

\subsection{Hermitian curves (type ${}^2\!A_2$).}
	
Here $q$ denotes a square 
 prime-power, and $q_0=q^{1/2}$. The hermitian curve is defined by the affine equation:
	$$
	C\colon 
x^{q_0+1}+y^{q_0+1}+z^{q_0+1}=0\,.
	$$
	It is an optimal curve (in fact, it is a maximal curve) of genus $g=(q-q_0)/2$ over $\F_q$. Its Weil polynomial is
	$$
	L(t) = (t+q_0)^{2g}\,.
	$$
	An easy computation shows that
	$$
	\# C(\mathbb{F}_q ) = 
	\# C(\mathbb{F}_{q^2} ) =
	q_0^3+1 = q+1 + 2 g q^{1/2}\,.
	$$
	
	\subsection{Suzuki curves
		(type ${}^2\!B_2$)}
	Here, $q=2^{2e+1}$ and $q_0=2^e$ for $e\geq 1$.
	The Suzuki curve is defined by the affine equation:
	$$
	C \colon y^{q}-y = x^{q_0} (x^q -x) \,.
	$$
	It has genus $g=q_0(q-1)$ and its Weil polynomial is
	$$
	L(t) = \left(t- q^{1/2}\frac{-1+i}{\sqrt{2}}\right)^{g}
	\left(t- q^{1/2}\frac{-1-i}{\sqrt{2}}\right)^{g}
	= (t^2+2q_0 t+q)^g
	\,.
	$$
	An easy computation shows that
	$$
	\# C(\mathbb{F}_q ) = 
	\# C(\mathbb{F}_{q^2} ) =
	\# C(\mathbb{F}_{q^3} ) =
	q^2+1\,.
	$$
	
	\subsection{Ree curves (type ${}^2G_2$)}
	Now, we take $q=3^{2s+1}$, $q_0=3^s$ for $s\geq 1$.
	The Ree curve is given by the two equations
	$$
	C \colon  \ y^q-y = x^{q_0}(x^q-x) \ \, , \ \  z^q-z = x^{q_0}(y^q-y)\,.
	$$
	It has genus 
	$g = \frac{3}{2}q_0(q-1)(q+q_0+1)$. Its Weil polynomial is
	$$
	L(t) = (t^2+q)^{\frac{1}{2}q_0(q-1)(q+3q_0+1)}
	(t^2+3q_0t+q)^{q_0(q^2-1)}\,.
	$$
	One readily checks that
	$$
	\# C(\mathbb{F}_q) = 
	\# C(\mathbb{F}_{q^2}) = 
	\# C(\mathbb{F}_{q^3}) = 
	\# C(\mathbb{F}_{q^4}) = 
	\# C(\mathbb{F}_{q^5}) = 
	1+ q^3 \,.
	$$
	
	\subsection{Drinfeld curve} Drinfeld inspired the general construction of Deligne-Lusztig varieties from the so called Drinfeld curve (see \cite{Bonafe}):
	$$C:y^q-y=z^{q+1}$$
	with $q$ a prime power. This is a DS-curve when $q$ is odd 
    since $\# C(\mathbb{F}_q)=\# C(\mathbb{F}_{q^2})=1+q$. Indeed, the curve has a unique point at infinity defined over $\F_q$, and if $(y,z)\in C(\F_{q^2})$ then $z^{q+1}\in\mathbb{F}_q$ thus $y^q-y=y^q+y-2y\in\mathbb{F}_q$, but $y^q+y$ belongs to $\mathbb{F}_q$, therefore $-2y\in\mathbb{F}_q$ which implies $y\in \F_q$ since $q$ is odd. Thus $y^q-y=0$ which implies $z=0$. 
	Hence the points of $C$ defined over $\F_{q^2}$ are precisely the point at infinity and $(y,0)$ for every $y\in \F_q$.
	
	
\section{$M$-torsion Carlitz curves}
	
	Carlitz initiated  the study of functions fields that play an analogous role to that of cyclotomic number fields in algebraic number theory
	(see \cite{carlitz1}, \cite{carlitz2}).  In this section, we shall deal with the projective no-singular curves attached to the Carlitz modules. Our aim is to point out that Carlitz curves are a good source of DS-curves. First, we recall their definition.
	We refer to \cite{Rosen2002}, \cite{conradcarlitz}, 
	\cite{gebhardt2002constructing}, and
	\cite{bamunoba2014arithmetic} for detailed  expositions on the arithmetic of Carlitz extensions.

	Let $M\in \F_q[t]$ be a monic polynomial of degree $\geq 1$.  The $M$-torsion Carlitz module
	$$
	\Lambda_M = \{ \gamma \in \overline{\F_q(t)} \colon [M](\gamma)= 0\} = \langle \lambda_M \rangle
	$$
	is a finite $1$-dimensional $\F_q[t]$-module via the Carlitz action determined by recursion
	and linearly from the rules: $$[t](x)=x^q+t x\,,\quad  [t^n](x)=[t]([t^{n-1}](x))
	\text{ for $n\geq 2$, and\ }  [1](x)=x\,.$$

	Let $K_M=\F_q(t,\lambda_M)$ and consider the Carlitz extension $K_M/\F_q(t)$  attached to $M$. The field $K_M$ produces an abelian extension 
    over $\F_q(t)$ unramified outside the primes dividing $M\infty$, where $\infty$ corresponds 
    to the place $1/t$. The Carlitz action induces an isomorphism between $(\F_q[t]/M)^*$ and the Galois group $\operatorname{Gal}(K_M/\F_q(t))$:
	$$
	(\F_q[t]/M)^* \longrightarrow
	\operatorname{Gal}(K_M/\F_q(t))
	\,,\quad
	Q \mapsto \sigma_Q\colon \lambda_M\mapsto [Q](\lambda_M)\,.
	$$
	Let $\Phi_{M}(x)$ be the
	$M$-th Carlitz polynomial
	defining the extension $K_M/\F_q(t)$; that is,
	$$
	\Phi_{M}(x) = \frac{[M](x)}{\displaystyle{\prod_{Q|M}} \Phi_{Q}(x)}
	$$
	where $Q$ runs the monic polynomials in $\F_q[t]$ dividing $M$ of degree less than $\operatorname{deg}M$, and $\Phi_1(x)=x$.
	The minimal polynomial of $\lambda_M$ over $\F_q(t)[x]$ is the irreducible Carlitz polynomial
	$\varPhi_M(x)$.
	
	The $M$-torsion field $K_M$ can be regarded as the function field of an algebraic curve defined over~$\F_q$, that we shall denote here by $\mathcal{X}_M$ and call it the Carlitz curve of level $M$. In fact, by Galois theory, for every subgroup 
	$H\subseteq (\F_q[t]/M)^*$ we can consider the non-singular projective curve 
	$\mathcal{X}_M^H$ attached
	to the fixed field $K_M^H$. A result of Weil \cite{Wei48} allows us to compute the zeta function of $\mathcal{X}_M^H$. Indeed, on has
	$$
	\zeta(\mathcal{X}_M^H/\F_p,t) = \frac{P_H(t)}{(1-t)(1-qt)}
	$$
	with
	$$
	P_H(t)= \prod_{\substack{\chi \neq 1\\  \chi_H = 1}} L(\chi,t)
	$$
	where
	$\chi\colon (\F_q[t]/M)^* \longrightarrow \C^*$ runs the non-trivial Dirichlet characters and 
	$\chi_H$ stands for the restriction of $\chi$ to $H$. The coefficients of the polynomials 
	$
	L(\chi,t)=\sum_{n=0}^\infty A(n,\chi) t^n 
	$
	satisfy
	$$
	A(n,\chi) = \sum_{\substack{\operatorname{deg}(f)=n \\  f \text{ monic} }} \chi(f) \,.
	$$
	By the orthogonality relations, one has
	$A(n,\chi)=0$ for $n\geq \operatorname{deg}(M)$. In order to explore the DS property of the curves 
	$\mathcal{X}_M^H$, we need to control
	the number of places $a_d^H$ on $\mathcal{X}_M^H$ of degree $d$.
	
	To this end, one must take into account that all places of $\mathcal{X}_M^H$
	over
	the place $\infty$ of $\F_p(t)$ 
	have degree one. In other words, the place $\infty$
	only contributes to $a_1^H$ (see \cite{Rosen2002}, Chap. 13). In general, for a finite place $\mathfrak{P}$ of degree $d$ in $\mathcal{X}_M^H$ over a place $\mathfrak{p}=(\pi)$, where $\pi\in \F_q[t]$ is an irreducible polynomial of 
	degree~$d'$, one has
	$$
	\# [  (\F_q[t]/M)^*\colon H ] = e_{\pi} f_{\pi} g_{\pi} 
	$$
	where $e_{\pi}$ is the ramification index at $\mathfrak{P}$, 
	$f_{\pi} = [\F_{\mathfrak{P}} \colon \F_{\mathfrak{p}}]
	= [\F_{p^d} \colon \F_{p^{d'}}]
	$ is the residual degree, and 
	$g_{\pi}$ is the number of Galois conjugates of ${\mathfrak{P}}$.
	The residual degree $f_\pi$ satisfies
	$
	d = f_\pi \cdot d'
	$
	and it can be computed as the order of the class $\pi \bmod M$ in the subgroup $H$; that is, $f_\pi$ is the minimum integer $f\geq 1$ such that $\pi^f \bmod M$ belongs to $H$. 
 
        For $d>1$ 
	one has
	$$
	a_d^H = \frac{m}{h} \,.
	\sum_{d'\mid d}
	\left(
	\sum_{
		\substack{
			\deg(\pi)=d'\\
			d=f_\pi d'}}
	\frac{1}{f_\pi e_\pi}
	\right)
	$$
	where $m=\# (\F_q[t]/M)^*$, and  
	$h=\# H$.
	
	It is fairly easy to exhibit instances of $M$ and $H$ that produce DS-curves $\mathcal{X}_M^H$. 
	
	\begin{prop}\label{prop5.1}
		Let $\ell$ be a prime number, and let 
		$H\subseteq (\F_q[t]/M)^*$
		be a subgroup.
		Assume that all irreducible factors of $M\in \F_q[t]$ have degree different form $1$ and $\ell$. Moreover, suppose that
		the classes mod $M$ of all primes $\pi\in \F_q[t]$ of
		degree $1$ and $\ell$ do not have order $\ell$ and $1$ in $H$, respectively. Then, $\mathcal{X}_M^H$ has Diophantine stability for $\F_{q^\ell}/\F_q$.
	\end{prop}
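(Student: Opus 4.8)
The plan is to turn the statement into the vanishing of a single numerical invariant and then to read that vanishing off directly from the explicit formula for the number of places recorded above. By Proposition~\ref{prop2.5}, the curve $\mathcal{X}_M^H$ is Diophantine stable for $\F_{q^\ell}/\F_q$ exactly when $a_d^H=0$ for every integer $d$ with $1<d\mid\ell$. Since $\ell$ is prime, the only such $d$ is $\ell$ itself, so the whole proposition reduces to proving that $a_\ell^H=0$.

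To that end I would invoke the formula
$$
a_\ell^H=\frac{m}{h}\sum_{d'\mid\ell}\ \sum_{\substack{\deg(\pi)=d'\\ \ell=f_\pi d'}}\frac{1}{f_\pi e_\pi}\,,
$$
and observe that, $\ell$ being prime, the outer sum has exactly two terms: $d'=1$, which forces $f_\pi=\ell$, and $d'=\ell$, which forces $f_\pi=1$. In both cases $\deg(\pi)\in\{1,\ell\}$, so by the hypothesis that no irreducible factor of $M$ has degree $1$ or $\ell$, the polynomial $\pi$ is coprime to $M$; hence the place $\mathfrak p=(\pi)$ is unramified in $K_M$ (so $e_\pi=1$) and $\pi\bmod M$ is a genuine element of $(\F_q[t]/M)^*$. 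Places above $\infty$ never enter, since they all have degree $1$ while $\ell>1$. Recall also that $f_\pi$ equals the least integer $f\ge 1$ with $\pi^f\bmod M\in H$, i.e. the order of $\pi\bmod M$ in the quotient $(\F_q[t]/M)^*/H$; this is the meaning of the phrase \emph{order in $H$} used in the statement.

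It then remains only to check that each of the two inner sums is empty. For $d'=1$ the inner sum ranges over degree-$1$ primes $\pi$ whose class $\pi\bmod M$ has order $\ell$ relative to $H$, and the hypothesis on degree-$1$ primes says there are none. For $d'=\ell$ the inner sum ranges over degree-$\ell$ primes $\pi$ with $\pi\bmod M\in H$, i.e. of order $1$ relative to $H$, and the hypothesis on degree-$\ell$ primes rules these out as well. Hence $a_\ell^H=0$, and Proposition~\ref{prop2.5} yields $\mathcal{X}_M^H(\F_q)=\mathcal{X}_M^H(\F_{q^\ell})$. I expect no genuine obstacle here: the argument is entirely bookkeeping with the quoted formula for $a_d^H$, and the only points that deserve care are the correct reading of the two order conditions and the verification — handled above by means of the degree constraint on the irreducible factors of $M$ — that no ramified finite place, and no place above $\infty$, can contribute a closed point of degree $\ell$.
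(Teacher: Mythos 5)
Your proposal is correct and follows essentially the same route as the paper's own proof: reduce via Proposition~\ref{prop2.5} to showing $a_\ell^H=0$, note that a degree-$\ell$ place must lie over a prime $\pi$ of degree $1$ with $f_\pi=\ell$ or of degree $\ell$ with $f_\pi=1$, use the degree hypothesis on the factors of $M$ to see such $\pi$ are unramified, and then kill both cases with the order hypotheses. Your version merely spells out the bookkeeping (the $\infty$-places and the reading of ``order in $H$'' as order in $(\F_q[t]/M)^*/H$) a bit more explicitly than the paper does.
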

	
	\begin{proof}
		When $d=
		\ell$ is a prime number, the primes $\pi$ of $\F_q[t]$ that can contribute to $a_\ell^H$ have residual degree either $f_\pi=1$ or $\ell$. Thus, such a polynomial $\pi$ must have degree $\ell$ or $1$ and, due to the hypothesis, they are
		coprime with $M$ and hence are unramified in $K_M^H$. 
		Also we have that the order of $\pi$ mod $M$ in $H$ is not $\ell$ for all primes $\pi$ of degree 1 and the order of $\pi$ in $H$ is not $1$ for all primes $\pi$ of degree $\ell$, then $a_\ell^H=0$. Thus, 
		$\mathcal{X}_M^H$ has Diophantine stability for $\F_{q^\ell}/\F_q$.
	\end{proof}

	\begin{prop}\label{prop5.2}
		Let $M\in\mathbb{F}_q[t]$ be a polynomial of degree $m>2$,
		and let $H\subseteq \mathbb{F}_q^*\subseteq (\mathbb{F}_q[t]/M)^*$ be a subgroup.
		Let $k$ be an integer with $2\leq k<m$. Assume that for every prime divisor $\pi$ of $M$ it holds $k\neq deg(\pi) f_{\pi}$. Then, one has $a_k^H=0$. 
	\end{prop}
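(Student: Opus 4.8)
The plan is to work directly with the explicit formula for the number of degree-$k$ places recalled just above the statement. Since $k\geq 2>1$, one has
$$
a_k^H \;=\; \frac{\#(\F_q[t]/M)^*}{\#H}
\sum_{d'\mid k}\ \sum_{\substack{\deg(\pi)=d'\\ k=f_\pi d'}}\frac{1}{f_\pi e_\pi}\,,
$$
so it suffices to prove that every inner sum is empty, i.e.\ that no irreducible $\pi\in\F_q[t]$ satisfies $k=\deg(\pi)\,f_\pi$. I would split the irreducibles $\pi$ into those dividing $M$ and those not. For $\pi\mid M$ there is nothing to do: the hypothesis says precisely that $k\neq\deg(\pi)f_\pi$, so these primes never contribute.

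The substantive case is $\pi\nmid M$. Then $\pi\bmod M$ is a unit, its cyclic subgroup of $(\F_q[t]/M)^*$ contains $1\in H$, so $f_\pi$ is well defined as the least $f\geq 1$ with $\pi^f\bmod M\in H$. Here I would invoke the hypothesis $H\subseteq\F_q^*$: as a subgroup of $(\F_q[t]/M)^*$, $H$ consists of classes of nonzero constants, so $\pi^{f_\pi}\bmod M\in H$ means $\pi^{f_\pi}\equiv c\pmod M$ for some $c\in\F_q^*$, i.e.\ $M\mid\pi^{f_\pi}-c$. Since $\pi$ is nonconstant, $\pi^{f_\pi}-c$ is a nonzero polynomial of degree exactly $f_\pi\deg(\pi)\geq 1$, so divisibility by $M$ forces $f_\pi\deg(\pi)\geq\deg(M)=m$. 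As $k<m$, this yields $k<f_\pi\deg(\pi)$, and in particular $k\neq\deg(\pi)f_\pi$. Combining the two cases, all inner sums in the displayed formula vanish and $a_k^H=0$.

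I expect the only delicate point to be the identification, for an unramified $\pi$, of the residual degree $f_\pi$ with the least exponent landing in $H$, together with the observation that "landing in $H\subseteq\F_q^*$'' means "becoming a nonzero constant modulo $M$''; this is exactly what powers the degree estimate $f_\pi\deg(\pi)\geq m$, and it is where the restriction $H\subseteq\F_q^*$ (rather than an arbitrary subgroup of $(\F_q[t]/M)^*$) enters essentially. Once that is granted the rest is routine bookkeeping with the formula recalled above; note in particular that one never needs to evaluate the ramification indices $e_\pi$, since the argument only shows that the relevant index set of primes is empty.
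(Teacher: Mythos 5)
Your proposal is correct and follows essentially the same route as the paper: both reduce to showing no irreducible $\pi$ can satisfy $k=\deg(\pi)f_\pi$, disposing of $\pi\mid M$ by the stated hypothesis and of $\pi\nmid M$ by observing that $\pi^{f_\pi}-u\in M\,\F_q[t]$ for some $u\in H\subseteq\F_q^*$ forces $f_\pi\deg(\pi)\geq\deg(M)=m>k$. The only cosmetic difference is that the paper argues by contradiction from a hypothetical degree-$k$ place while you make the same point through the explicit formula for $a_k^H$.
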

	\begin{proof}
		Suppose for contradiction that $\mathcal{X}_M^H$ has a place $\mathfrak{P}$ of degree $2\leq k<m$. 
		Since $k\neq 1$ we can assume that $\mathfrak{P}$ is a finite place. Then
		$\mathfrak{P}\cap\mathbb{F}_q[t]=(\widetilde\pi)$, with $\widetilde\pi$ a prime in $\mathbb{F}_q[t]$ of degree $d$ where $k=f_{\widetilde\pi} d$. By hypothesis we must have $\widetilde\pi\nmid M$; that is, $\widetilde\pi$ is an unramified prime, and thus $f_{\widetilde\pi}$ is the minimum power such that $\widetilde\pi^{f_{\widetilde\pi}}\in H$. Hence $\widetilde\pi^{f_{\widetilde\pi}}-u\in M\mathbb{F}_q[t]$ for some $u\in H\subseteq \mathbb{F}_q^*$ and it has degree $f_{\widetilde\pi} d=k<m$. But this is impossible since 
		$M$ has degree $m$ and $\widetilde\pi^{f_{\widetilde\pi}}-u$ is not the zero polynomial. Therefore we must have $a_k^H=0$.
	\end{proof}
	
	\begin{corollary}\label{cor5.2}
		Let $M=\pi^r\in\mathbb{F}_q[t]$ be a prime-power polynomial with $m=deg(\pi)>2$ and $r\geq 1$. Let $H\subseteq \mathbb{F}_q^*\subseteq (\mathbb{F}_q[t]/M)^*$ be a subgroup. Then, $a_k^H=0$ for all $k$ such that $2\leq k<rm$ and $k\neq m$.
	\end{corollary}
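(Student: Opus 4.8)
The plan is to obtain this statement as an immediate application of Proposition~\ref{prop5.2} to the polynomial $M=\pi^r$. First note that $\deg(M)=rm>2$, since $m>2$ and $r\geq 1$, so the standing hypothesis of Proposition~\ref{prop5.2} is met and its conclusion yields $a_k^H=0$ for every integer $k$ with $2\leq k<rm$ such that $k\neq \deg(\pi')\,f_{\pi'}$ for all prime divisors $\pi'$ of $M$. But the only prime divisor of $M=\pi^r$ is $\pi$ itself, of degree $m$, so everything reduces to computing the residual degree $f_{\pi}$ of the places of $\mathcal{X}_M^H$ lying above $\pi$.

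The key step is to check that $f_{\pi}=1$. Since $\pi\mid M$, the prime $\pi$ is ramified in the Carlitz extension $K_M/\F_q(t)$; in fact, for $M=\pi^r$ the inertia subgroup at $\pi$ inside $\operatorname{Gal}(K_M/\F_q(t))\cong(\F_q[t]/\pi^r)^*$ is the whole group, so that $\pi$ is totally ramified in $K_M/\F_q(t)$ (this is the function-field analogue of the total ramification of $p$ in $\Q(\zeta_{p^r})/\Q$; see \cite{Rosen2002}, \cite{conradcarlitz}). Hence $\pi$ is also totally ramified in the subextension $K_M^H/\F_q(t)$ for any subgroup $H\subseteq(\F_q[t]/M)^*$, in particular for $H\subseteq\F_q^*$: there one has $g_{\pi}=f_{\pi}=1$ and $e_{\pi}=[(\F_q[t]/M)^*:H]$. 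Therefore $\deg(\pi)\,f_{\pi}=m$.

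Plugging this into Proposition~\ref{prop5.2}, the requirement that $k\neq\deg(\pi')\,f_{\pi'}$ for every prime divisor $\pi'$ of $M$ becomes simply $k\neq m$, and we conclude that $a_k^H=0$ for all $k$ with $2\leq k<rm$ and $k\neq m$, as claimed. The only ingredient beyond Proposition~\ref{prop5.2} is the total ramification of $\pi$ in $K_{\pi^r}/\F_q(t)$; this is standard in the arithmetic of Carlitz extensions, so I do not anticipate any genuine difficulty here, only the need to invoke it precisely.
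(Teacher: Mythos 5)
Your proof is correct and follows essentially the same route as the paper: both reduce the corollary to Proposition~\ref{prop5.2} by observing that $\pi$ is the unique prime divisor of $M=\pi^r$ and that total ramification of $\pi$ in the Carlitz extension forces $f_\pi=1$, hence $\deg(\pi)f_\pi=m$. Your version merely adds the (correct) justification of total ramification via the cyclotomic analogy, which the paper asserts without comment.
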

	\begin{proof}
		Fix $k$ with $2\leq k<rm$ and $k\neq m$. 
		Since $K_M/\F_q(t)$ is totally ramified at $\pi$, it follows that $f_\pi =1$. Hence, we can apply Proposition \ref{prop5.2} 
		since $k\neq \operatorname{deg}(\pi) f_\pi=m$.
	\end{proof}
	
	Let us illustrate the above results with two examples.

 \vskip 0.2truecm
	
	Example 1. Consider $M=t^4+t+1$ in $\F_2[t]$ and let $H$ be the trivial subgroup.
	The curve $\mathcal{X}_M$ has genus $14$.  
	The numerator of zeta function $\zeta(\mathcal{X}_M/\F_2,T)$ is the polynomial:
	$$
	\begin{array}{ll}
		P(T) = & (4T^{4} - T^{2} + 1) \cdot
		(4T^{4} + 2T^{3} + 3T^{2} + T + 1)^{2}  \cdot    \\[8pt]
		& (16T^{8} + 40T^{7} + 52T^{6} + 50T^{5} + 39T^{4} + 25T^{3} + 13T^{2} + 5T + 1)^{2} \,.
	\end{array}
	$$
	The sequence of number of places begins:
	$$[a_d]=
	[15,0, 0, 1, 0, 5, 30, 30, 60, 45, 210, 345, 690, 1095,\dots] .$$
The values $a_2=a_3=0$ are explained by Corollary \ref{cor5.2}.

  \vskip 0.2truecm
  
	Example 2. Take
	$M = (t^6+t+1)^2$ and $H$ trivial in $\F_2[t]$. The extension $K_M/\F_2(t)$ has degree
	$4032=2^6\cdot 3^ 2 \cdot 7$ and the curve $\mathcal{X}_M$ has genus $19969$. The sequence of number of places begins:
	$$
	[a_d]=[4032,0,0,0,0,1,0,0,0,0,0,0,0,0,\dots]\,.
	$$
	The values $a_k=0$ for $2\leq k<12$ with $k\neq 6$  are explained by Corollary \ref{cor5.2}. By Proposition \ref{prop5.1}, $a_6$ can only be contributed by the unique ramified place $(t^6+t+1)$ and easily one finds $a_6=1$.  

  \vskip 0.2truecm
  
The computations involved in the examples above have been performed with Magma \cite{Bos97} and SageMath \cite{Sage24}.	
	
\section{$M$-torsion Drinfeld curves}
	
	As we have discussed in the previous section, the Carlitz action is defined recursively and linearly by the axioms:
	$$
	\begin{array}{l@{\,=\,}l}
		[1](x) & x  \\[2pt]
		[t](x) & x^q+tx \\[2pt]
		[t^n](x) & [t]([t^{n-1}](x)) \text{ for $n\geq 2$.}
	\end{array}
	$$
	With the aim to attack the Langlands program in positive characteristic, Drinfeld generalized the Carlitz action as follows. Consider a $q^n$-degree polynomial of the form
	\begin{equation}\label{eqDr}
		\mathfrak{h}(x)=u_n x^{q^n}+u_{n-1}
		x^{q^{n-1}}+\ldots+u_1x^q+tx\in\mathbb{F}_q(t)[x]\,,
	\end{equation} 
	and define the Drinfeld action 
	recursively and $\mathbb{F}_q$-linearly by:
	$$
	\begin{array}{l@{\,=\,}l}
		[1]_{\mathfrak{h}}(x) & x  \\[2pt]
		[t]_{\mathfrak{h}}(x) & \mathfrak{h}(x) \\[2pt]
		[t^n]_{\mathfrak{h}}(x) & [t]_{\mathfrak{h}}([t^{n-1}]_{\mathfrak{h}}(x)) \text{ for $n\geq 2$.}
	\end{array}
	$$
    

	For a given polynomial $M\in\mathbb{F}_q[t]$, consider the $n$-rank $M$-torsion Drinfeld module
	$$\Lambda_{M,\mathfrak{h}}:=\{\gamma\in\overline{\mathbb{F}_q(t)}:[M]_{\mathfrak{h}}(\gamma)=0\}\,.$$ 
	One has that $\Lambda_{M,\mathfrak{h}}\cong (\mathbb{F}_q[t]/M)^n$ as $\F_q[t]$-module with the Drinfeld action.
    The image of the embedding
	$$
	\operatorname{Gal}(\F_q(t)(\Lambda_{M,\mathfrak{h}})/\mathbb{F}_q(t))
	\rightarrow 
	\operatorname{GL}_n(\mathbb{F}_q[t]/M)
	$$
	has been studied by Pink and his collaborators (see \cite{pink1}, \cite{pink2}, \cite{pink3}, \cite{pink4}, \cite{pink5}.
	
	Similarly as we did in the Carlitz case, we introduce the Drinfeld polynomials
$$\Phi(x,t)=\Phi_{M,\mathfrak{h}}(x):=\frac{[M]_{\mathfrak{h}}(x)}{\prod_{Q|M}\Phi_{Q,\mathfrak{h}}(x)}\in\mathbb{F}_q[t][x],$$
	where $Q$ runs through the monic polynomials in $\mathbb{F}_q[t]$ dividing the monic polynomial $M\in\mathbb{F}_q[t]$ and  $\operatorname{deg}_x(Q) < \operatorname{deg}_x(M)$.

	From now on, we assume that
	$
	\Phi(x,t)$ is an irreducible polynomial in $\F_q[t][x]$ and denote by
	${\mathcal X}_{\Phi}$
	the Drinfeld curve over $\F_q$ determined by the defining equation 
	$
	\Phi(x,t)=0$. We say that 
 ${\mathcal X}_{\Phi}$ is the $M$-torsion $\mathfrak{h}$-Drinfeld curve of rank $n$. Also, let $K_{\Phi}$ be the function field associated to $\mathcal{X}_{\Phi}$; namely, the quotient field $\mathbb{F}_q(t)[x]/(\Phi(x,t))$.
 
\begin{remark} To control the  ramification divisor of the extension
$\F_q(t)(\Lambda_{M,\mathfrak{h}})/\mathbb{F}_q(t)$ is not an easy task (see Taguchi \cite{Tag}). However, for a given $\Phi$ as above, 
the  ramification divisor of the extension
$K_\Phi/\mathbb{F}_q(t)$
is more manageable using 
valuation theory. 
\end{remark} 

In the next subsections we build some examples of Drinfeld curves with Diophantine Stability.

\subsection{Examples of 
Drinfeld DS-curves of rank 3}	
We show how to impose conditions on the coefficients of the polynomial 
\begin{equation}\label{exam3rank}
		\mathfrak{h_3}(x)=u_3 x^{q^3}+u_{2}
		x^{q^2}+u_1x^q+tx\in\mathbb{F}_q(t)[x]
	\end{equation}
to produce $t$-torsion
Drinfeld curves of rank $3$
 with Diophantine stability with $q=2$. The method presented here can be extended to produce examples of Drinfeld DS-curves for higher rank. As usual for a place $\pi$ of $\mathbb{F}_q(t)$ we denote by $\operatorname{ord}_{\pi}(f)$ or $v_{\pi}(f)$ for $f\in\mathbb{F}_q(t)$ the integer $k$ such that $\pi^k||f$, 
	and denote by $\mathbb{F}_q(t)_{\pi}$ the completion of $\mathbb{F}_q(t)$ at $\pi$  with the natural extension of $\operatorname{ord}_{\pi}=v_{\pi}$ in the completion field.
%
	
	
	\begin{lemma} 
        \label{lemma62}
 	Let $\mathfrak{h_3}(x)=u_3 x^{q^3}+u_{2}
		x^{q^2}+u_1x^q+tx\in\mathbb{F}_q(t)[x]$ with $q=2$. Denote $\Phi(x,t)=\Phi_{t,\mathfrak{h_3}}(x)$ defined by $ [t]_{\mathfrak{h_3}}(x)= x \cdot \Phi_{t,\mathfrak{h_3}}(x)$. Assume that
  $(
  \operatorname{ord}_t(u_3),
  \operatorname{ord}_t(u_2),
  \operatorname{ord}_t(u_1)
  ) = (0,\geq 1,\geq 1)$.
  Then, 
			\begin{enumerate}
			\item $\Phi_{t,\mathfrak{h}_3}(x)\in\mathbb{F}_q(t)[x]$ is irreducible.
			\item Assume that for all  $\pi\in\{1/t,t+1,t^2+t+1\}$ it holds $\operatorname{ord}_{\pi}(u_i)\geq 0$ for $i
   \in\{1,2,3\}$. In addition, suppose that $\gcd(x^4+x,\Phi_{t,\mathfrak{h_3}}(x) \!\!\pmod{t^2+t+1})=1$ and $\gcd(x^2+x+1,\Phi_{t,\mathfrak{h_3}}(x)\!\! \pmod{t+1})\neq x^2+x+1$, then
the $t$-torsion $\mathfrak h_3$-Drinfeld curve $\mathcal{X}_{\Phi}$ of rank $3$ has DS for the extension $\F_4/\F_2$. 
\footnote{An explicit example of $\mathfrak{h}_3$ satisfying all the assumptions in lemma \ref{lemma62} is: $u_1(t)=\frac{t(t^2+t+1)}{(t^3+t+1)}$, $u_2(t)=\frac{t(t+1)^2}{(t^3+t+1)}$ and $u_3(t)=1$.}
\end{enumerate} 
\end{lemma}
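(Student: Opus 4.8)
With $q=2$ one has $[t]_{\mathfrak h_3}(x)=\mathfrak h_3(x)=u_3x^8+u_2x^4+u_1x^2+tx=x\,(u_3x^7+u_2x^3+u_1x+t)$, hence $\Phi(x,t)=u_3x^7+u_2x^3+u_1x+t\in\F_2(t)[x]$, of degree $7$ in $x$. For (1) the plan is to apply Eisenstein's criterion at the place $t$: after localising $\F_2[t]$ at $(t)$, the hypothesis $\operatorname{ord}_t(u_3)=0$ makes $u_3$ a unit of the DVR $\F_2[t]_{(t)}$, while $\operatorname{ord}_t(u_2),\operatorname{ord}_t(u_1)\ge1$ (and the vanishing of the coefficients of $x^6,x^5,x^4,x^2$) force all lower coefficients into the maximal ideal, and $\operatorname{ord}_t(t)=1$ puts the constant term in the maximal ideal but not in its square. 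So $\Phi$ is Eisenstein at $t$; hence it is irreducible over $\F_2(t)$, and the place of $K_\Phi=\F_2(t)[x]/(\Phi)$ above $t$ is unique, totally ramified, of residue degree $1$.

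For (2), I would first record that $K_\Phi/\F_2(t)$ is separable of degree $7$ (the $x$-derivative $u_3x^6+u_2x^2+u_1$ is nonzero) and that the rational place above $t$ forces the constant field of $K_\Phi$ to be $\F_2$, so $\mathcal X_\Phi$ is indeed a curve over $\F_2$. By Proposition~\ref{prop2.5} with $m=2$ it then suffices to show $a_2=0$, i.e.\ that $\mathcal X_\Phi$ has no closed point of degree $2$. Under the degree-$7$ morphism $\mathcal X_\Phi\to\mathbb{P}^1_{\F_2}$ attached to $\F_2(t)\subseteq K_\Phi$, such a point would lie over one of the closed points $t$, $t+1$, $1/t$ (degree $1$) or $t^2+t+1$ (degree $2$, the only monic irreducible quadratic over $\F_2$); its residue degree over the base place is then $2$ in the first three cases and $1$ over $t^2+t+1$. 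The plan is to kill each of these four fibres.

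The fibre over $t$ is handled by (1). For $t+1$ one has $t\equiv1$, so $\Phi$ reduces modulo $(t+1)$ to $\overline{\Phi}=\bar u_3x^7+\bar u_2x^3+\bar u_1x+1\in\F_2[x]$ (legitimate since $\operatorname{ord}_{t+1}(u_i)\ge0$); a degree-$2$ point over $t+1$ has residue field $\F_4$, so it reduces to a root of $\overline{\Phi}$ in $\F_4\setminus\F_2$, whose minimal polynomial over $\F_2$ is $x^2+x+1$. Such a point can thus exist only if $x^2+x+1\mid\overline{\Phi}$, i.e.\ only in the excluded case $\gcd(x^2+x+1,\Phi\bmod(t+1))=x^2+x+1$. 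For $t^2+t+1$ the residue field is $\F_4$; a degree-$2$ point over it has residue degree $1$, hence residue field $\F_4$, hence reduces to a root of $\overline{\Phi}\in\F_4[x]$ lying in $\F_4$; since $x^4+x=\prod_{a\in\F_4}(x-a)$, this forces $\gcd(x^4+x,\Phi\bmod(t^2+t+1))\ne1$, again excluded. Finally, over $1/t$ I would compute the Newton polygon of $\Phi$ in $x$ over the completion $\F_2(t)_{1/t}$: using $\operatorname{ord}_{1/t}(u_i)\ge0$, $\operatorname{ord}_{1/t}(t)=-1$ and $u_3$ a unit there, it is the single segment from $(0,-1)$ to $(7,0)$, of slope $1/7$ in lowest terms, so $1/t$ is totally ramified and has a unique rational place above it. Combining the four cases gives $a_2=0$, and Proposition~\ref{prop2.5} yields $\mathcal X_\Phi(\F_4)=\mathcal X_\Phi(\F_2)$, as claimed.

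The routine inputs are the Eisenstein step and the two $\gcd$ hypotheses, which translate almost verbatim into the absence of the forbidden points over $t+1$ and $t^2+t+1$. The step I expect to be the main obstacle is the analysis over $1/t$, together with the exclusion of any ``vertical'' component where $x=\infty$ sits over a finite place of $\F_2(t)$: this is precisely where one must use that $u_3$ is a unit at the relevant places (as it is, e.g., for the $\mathfrak h_3$ in the footnote, where $u_3=1$), so that $x$ stays integral over the corresponding local rings, $\Phi$ reduces to a genuine degree-$7$ polynomial at every finite place, and the Newton polygon at $1/t$ collapses to a single totally ramified slope, leaving no room for a place of residue degree $2$ to slip in.
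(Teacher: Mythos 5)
Your proof is correct and follows essentially the same route as the paper: Eisenstein at $t$ for irreducibility, then a case analysis of the four places $t$, $t+1$, $t^2+t+1$, $1/t$ of degree at most $2$, using the Newton polygon at $t$ and $1/t$ and the two $\gcd$ hypotheses at $t+1$ and $t^2+t+1$ to rule out any degree-$2$ closed point (the paper phrases this as directly comparing $\mathcal X_\Phi(\F_2)$ with $\mathcal X_\Phi(\F_4)$ rather than via $a_2=0$, but that is the same argument). If anything you are slightly more careful than the paper in flagging that one needs $u_3$ to be a unit at the relevant places (the stated hypothesis $\operatorname{ord}_\pi(u_3)\ge 0$ is what makes the reductions degree $7$ and the polygon at $1/t$ a single slope-$1/7$ segment), a point the paper's proof passes over silently.
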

\begin{proof} 
 Due to the hypothesis on the coefficients of 
 $\Phi_{t,\mathfrak h_3}(x)=u_3 x^7+u_2 x^3+u_1 x+t$, the first claim is an immediate consequence of Eisenstein's irreducibility criterion applied to the prime $t$.
By considering the Newton polygon of $\Phi_{t,\mathfrak h_3}(x)$
at the place $t$, we observe that $t$ is totally ramified in the extension $K_\Phi/\F_2(t)$ (we refer to \cite[II,\S6]{NeukirchBook} for the properties of Newton polygons).

In order to compare the sets $\mathcal{X}_{\Phi}(\mathbb{F}_2)$ and $\mathcal{X}_{\Phi}(\mathbb{F}_4)$ we need to control the decomposition of the places of degrees 1 and 2: $t$, $t+1$, $t^2+t+1$, $\infty=1/t$ of $\mathbb{F}_2(t)$ in the field extension $K_{\Phi}/\mathbb{F}_2(t)$.
		
Since the place $t$ is totally ramified in $K_{\Phi}/\mathbb{F}_2(t)$, 
it does not produce points 
in $\mathcal{X}_{\Phi}(\mathbb{F}_4) 
\setminus
\mathcal{X}_{\Phi}(\mathbb{F}_2)
$.
Similarly, considering the Newton polygon of 
$\Phi_{t,\mathfrak h_3}(x)$
at the place $1/t$ and
taking under consideration our assumptions on $\operatorname{ord}_{1/t}$
for the coefficients $u_i$, we observe that the place $1/t$ produces a unique point at infinity
in $\mathcal{X}_{\Phi}$ which is defined over $\F_2$.
		
Now consider the place $\pi=t+1$ of $\mathbb{F}_2(t)$.		
Since $\operatorname{ord}_{t+1}(u_i)\geq 0$, we can consider the reduced polynomial
$g(x):=\Phi_{t,\mathfrak{h_3}}(x) 
\pmod {t+1}\in\mathbb{F}_2[x]$.
In order that the place $t+1$ does not provide points in 
$\mathcal{X}_{\Phi}(\mathbb{F}_4) 
\setminus
\mathcal{X}_{\Phi}(\mathbb{F}_2)$ we impose that the factorization over $\mathbb{F}_4[x]$ of the above polynomial does not appear any new linear factor; i.e. we need 
$x^2+x+1\nmid g(x)$.
		
Finally take the place $\pi=t^2+t+1$ of $\mathbb{F}_2(t)$, and assume $\operatorname{ord}_{t^2+t+1}(u_i)\geq 0$. Now we consider $g(x):=\Phi_{t,\mathfrak{h_3}}(x) 
    \pmod {t^2+t+1}\in\mathbb{F}_4[x]$,
and we want to impose that $g(x)$ has no roots in $\F_4$; that is to say, $\gcd(x^4+x,g(x))=1$. Putting altogether, the hypothesis on the valuations of the coefficients of $\mathfrak{h}_3(x)$ ensure that $\mathcal{X}_{\Phi}(\mathbb{F}_4) =
\mathcal{X}_{\Phi}(\mathbb{F}_2)$.
	\end{proof}

\subsection{Examples of 
Drinfeld DS-curves of arbitrary large rank}

In this subsection we use base change techniques to construct 
Drinfeld DS-curves defined over $\F_q$ of rank $n\geq 1$. 
	
To this end we consider the Drinfeld action on $\mathbb{F}_q[t]$ given by $\mathfrak{h}_{q,n}(x)=x^{q^n}+t x$.
Observe that we have the following equality:
	
\begin{equation}\label{basechange}
		[t]_{\mathfrak{h}_{q,n}}(x)
=x^{q^n}+tx =  [t]_{\mathfrak{h}_{q^n,1}}=[t](x).
	\end{equation}
In other words, for $\mathfrak{h}_{q,n}(x)=x^{q^n}+t x$,
the Drinfeld action on $\F_q[t]$ is the same as the Carlitz action on $\F_{q^n}[t]$. 
By linearity, one has  that for every $M\in \F_q[t]$ it holds
$
[M]_{\mathfrak{h}_{q,n}} (x)=
[M]_{\mathfrak{h}_{q^n,1}}(x)
$. From now on in this section, we assume that $M$ is a polynomial 
in $\F_{q}[t]$ that with the property that it factorizes in $\F_{q^n}[t]$ exactly as it does over $\F_{q}[t]$. Then, we have	
$$
\Phi_{M,\mathfrak{h}_{q,n}}(x)=\frac{[M]_{\mathfrak{h}_{q,n}}(x)}{\prod_{Q|M}\Phi_{Q,\mathfrak{h}_{q,n}}(x)}
=
\frac{[M]_{\mathfrak{h}_{q^n,1}}(x)}{\prod_{Q|M}\Phi_{Q,\mathfrak{h}_{q^n,1}}(x)} =
\Phi_{M,\mathfrak{h}_{q^n,1}}(x)
\in \F_q[t][x]\,.
$$
 	
Given such $M\in \F_q[t]$ and $n\geq 1$, 
for each $i\mid n$ we consider 
the function field: $$K_{M,i}=\mathbb{F}_{q^i}(t)[x]/(\Phi_{M,\mathfrak{h}_{q^n,1}}(x)),$$
and the corresponding curve $\mathcal{X}_{{M},i}$
defined over $\F_{q^i}$. Notice that
$\mathcal{X}_{M,n}$ is the $M$-torsion Carlitz curve over $\mathbb{F}_{q^n}$ discussed in the previous section,
and that $\mathcal{X}_{M,1}$ is the 
$M$-torsion $\mathfrak h_{q,n}$-Drinfeld curve of rank~$n$ defined over $\F_q$.
The different curves $\mathcal{X}_{{M},i}$ are obtained by constant field extensions from~$\mathcal{X}_{{M},1}$. Thus by \cite[Proposition  8.19]{Rosen2002} one has the following relation on the number of $k$-degree places:
	
	\begin{equation}\label{eqRosen1}
		a_k(\mathcal{X}_{{M},n})=\frac{1}{k}\sum_{d|k}\mu (d)a_1(\mathcal{X}_{{M},nk/d})
	\end{equation}
	where $\mu$ denotes the M\"oebius function.
	
	\begin{lemma}\label{descent}
	Keep the notations as above and let $n=\ell$ be a  prime number.
	If $a_k(\mathcal{X}_{{M},\ell})=0$ for
 some $k$ with $\ell \mid k$, then $a_{\ell k}(\mathcal{X}_{{M},1})=0$. 
	\end{lemma}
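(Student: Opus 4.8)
The plan is to read off the statement directly from the constant-field-extension identity \eqref{eqRosen1}, applied twice: once in rank $\ell$ at degree $k$, and once in rank $1$ at degree $\ell k$. Abbreviating $b(j):=a_1(\mathcal{X}_{M,j})=\#\mathcal{X}_{M,1}(\mathbb{F}_{q^j})$, formula \eqref{eqRosen1} gives
$$k\,a_k(\mathcal{X}_{M,\ell}) \;=\; \sum_{d\mid k}\mu(d)\,b\!\left(\tfrac{\ell k}{d}\right),\qquad \ell k\,a_{\ell k}(\mathcal{X}_{M,1}) \;=\; \sum_{d\mid \ell k}\mu(d)\,b\!\left(\tfrac{\ell k}{d}\right).$$
The task is then reduced to comparing these two $\mu$-weighted sums, which involve the very same family of values $b(\ell k/d)$.

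The key step is the divisor bookkeeping. Since $\ell$ is prime and $\ell\mid k$, write $k=\ell^{a}m$ with $a\ge 1$ and $\gcd(\ell,m)=1$, so that $\ell k=\ell^{a+1}m$. In each sum only squarefree $d$ contribute, and for both $k$ and $\ell k$ these are exactly the divisors of the form $d'$ or $\ell d'$ with $d'\mid m$ squarefree; this is precisely where $a\ge 1$, i.e. $\ell\mid k$, enters, since $\ell^{2}\mid \ell k$ already. Pairing the term for $d'$ with the term for $\ell d'$ and using $\mu(\ell d')=-\mu(d')$ together with $\ell k/(\ell d')=k/d'$, both right-hand sides collapse to the same expression
$$\sum_{\substack{d'\mid m\\ d'\ \mathrm{squarefree}}}\mu(d')\Bigl(b\!\left(\tfrac{\ell k}{d'}\right)-b\!\left(\tfrac{k}{d'}\right)\Bigr).$$
Hence $\ell k\,a_{\ell k}(\mathcal{X}_{M,1})=k\,a_k(\mathcal{X}_{M,\ell})$, that is $a_{\ell k}(\mathcal{X}_{M,1})=\tfrac{1}{\ell}\,a_k(\mathcal{X}_{M,\ell})$, and the hypothesis $a_k(\mathcal{X}_{M,\ell})=0$ forces $a_{\ell k}(\mathcal{X}_{M,1})=0$.

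As a sanity check one can argue geometrically instead: by \eqref{basechange} and the preceding discussion, $\mathcal{X}_{M,\ell}$ is the base change of $\mathcal{X}_{M,1}$ along $\mathbb{F}_{q^\ell}/\mathbb{F}_q$, and a closed point of $\mathcal{X}_{M,1}$ of degree $n$ splits over $\mathbb{F}_{q^\ell}$ into $\gcd(n,\ell)$ closed points of degree $n/\gcd(n,\ell)$; since $\ell$ is prime and $\ell\mid k$, a degree-$k$ closed point of $\mathcal{X}_{M,\ell}$ can only lie under a degree-$\ell k$ closed point of $\mathcal{X}_{M,1}$, and each of the latter produces exactly $\ell$ of them, recovering $a_k(\mathcal{X}_{M,\ell})=\ell\,a_{\ell k}(\mathcal{X}_{M,1})$. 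I expect the only delicate point to be keeping the divisor-combinatorics argument clean and being explicit that $\ell$ prime and $\ell\mid k$ are used in tandem: $\ell$ prime so that $d'\mapsto \ell d'$ exhausts the remaining squarefree divisors with $\mu(\ell d')=-\mu(d')$, and $\ell\mid k$ so that the squarefree divisors of $k$ and of $\ell k$ form the same set.
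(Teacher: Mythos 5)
Your proof is correct and takes essentially the same route as the paper: both apply the constant-field-extension formula \eqref{eqRosen1} twice and observe that, because $\ell\mid k$, the divisors of $\ell k$ not dividing $k$ are all non-squarefree (equivalently, $k$ and $\ell k$ have the same squarefree divisors), so the two M\"obius sums agree. Your pairing of $d'$ with $\ell d'$ additionally yields the sharper identity $a_k(\mathcal{X}_{M,\ell})=\ell\,a_{\ell k}(\mathcal{X}_{M,1})$, a minor refinement of the paper's argument.
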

	\begin{proof}
		We know that $0=a_k(\mathcal{X}_{{M},\ell})=\frac{1}{k}\sum_{d|k}\mu(d)a_1(\mathcal{X}_{{M},\ell k/d})$, and want to compute
		\begin{equation}\label{doseq}
			a_{\ell k}(\mathcal{X}_{{M},1})=\frac{1}{\ell k}\sum_{d|\ell k} \mu(d) a_1(\mathcal{X}_{{M},\ell k/d}).
		\end{equation}
		
We can (and do) write $k=\ell^m k'$ with $k'$ coprime with $\ell$ and $m\geq 1$ by assumption. 
We consider the set $\mathcal{S} = 
\{ d\in \Z_{\geq 1} \colon d\mid \ell k \,,\, d\nmid k \}.
$ Notice that if $d\in \mathcal S$, then one has $d=\ell^{m+1}k''$ with $k''|k'$.  We rewrite (\ref{doseq}) as follows:
		$$(\ell k)a_{\ell k}(\mathcal{X}_{{M},1})=\sum_{d|k}\mu(d)a_1(\mathcal{X}_{M,\ell k/d})+\sum_{d\in\mathcal{S}}\mu(d)a_1(\mathcal{X}_{{M},\ell k/d})\,.$$
	The first summand is zero by hypothesis and the second as well due to $m+1\geq 2$ which implies $\mu(d)=0$ for all $d\in\mathcal{S}$. 
	\end{proof}
	\begin{cor}\label{cor6.3} Let $\ell$ be a prime. Let $M\in\mathbb{F}_q[t]$ of degree $>\ell$ and irreducible over $\F_{q^\ell}[t]$.
    Then $\mathcal{X}_{M,1}(\mathbb{F}_{q^{\ell}})= \mathcal{X}_{M,1}(\mathbb{F}_{q^{\ell^2}})$, and thus $\mathcal{X}_{M,1}$  is a 
    DS-curve for $\F_{q^{\ell^2}}/\F_{q^\ell}$. 	
	\end{cor}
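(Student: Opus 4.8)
The plan is to reduce the claim to the single vanishing $a_\ell(\mathcal{X}_{M,\ell})=0$, establish that vanishing by inspecting the place‑counting formula for Carlitz curves, and then conclude via Lemma~\ref{descent}. First I would note that, $M$ being irreducible over $\F_{q^\ell}[t]$, it is a fortiori irreducible over $\F_q[t]$; hence the standing hypothesis of this subsection (that $M$ factorizes over $\F_{q^n}[t]$ exactly as over $\F_q[t]$, here with $n=\ell$) holds automatically, the polynomial $\Phi_{M,\mathfrak{h}_{q^\ell,1}}$ is irreducible as required, and $\mathcal{X}_{M,\ell}$ is precisely the full $M$-torsion Carlitz curve over $\F_{q^\ell}$, i.e.\ the case of the trivial subgroup $H=\{1\}$ of $(\F_{q^\ell}[t]/M)^*$. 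In particular, in the degree formula of the Carlitz section one has $h=1$, and for $\pi$ coprime to $M$ the integer $f_\pi$ is the multiplicative order of $\pi\bmod M$ in $(\F_{q^\ell}[t]/M)^*$.

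The heart of the proof is to show $a_\ell(\mathcal{X}_{M,\ell})=0$. Recall that on a Carlitz curve every place lying over $\infty$ has degree $1$, so a closed point of degree $\ell$ on $\mathcal{X}_{M,\ell}$ must lie over a finite place $(\pi)$ of $\F_{q^\ell}(t)$ with $\deg(\pi)\cdot f_\pi=\ell$. Since $\ell$ is prime, either $\deg\pi=\ell$ with $f_\pi=1$, or $\deg\pi=1$ with $f_\pi=\ell$. In both cases $\deg\pi\le\ell<\deg M$, so $\pi\neq M$; thus $\pi$ is unramified and coprime to $M$, and $f_\pi$ is the order of $\pi\bmod M$. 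If $f_\pi=1$ then $M\mid\pi-1$, impossible since $\pi-1$ is a nonzero polynomial of degree $\ell<\deg M$; if $f_\pi=\ell$ then $\pi$ is linear and $M\mid\pi^{\ell}-1$, impossible since $\pi^{\ell}-1$ is a nonzero polynomial of degree $\ell<\deg M$. Moreover any place over the ramified prime $(M)$ has degree a multiple of $\deg M>\ell$, hence cannot contribute to $a_\ell$. Therefore no closed point of degree $\ell$ exists and $a_\ell(\mathcal{X}_{M,\ell})=0$.

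To finish, I would apply Lemma~\ref{descent} with $k=\ell$ (so that $\ell\mid k$), obtaining $a_{\ell^2}(\mathcal{X}_{M,1})=0$. Consequently
$$\#\mathcal{X}_{M,1}(\F_{q^{\ell^2}})=\sum_{d\mid\ell^2}d\,a_d(\mathcal{X}_{M,1})=a_1+\ell\,a_\ell=\#\mathcal{X}_{M,1}(\F_{q^\ell}),$$
and since $\F_{q^\ell}\subseteq\F_{q^{\ell^2}}$ gives $\mathcal{X}_{M,1}(\F_{q^\ell})\subseteq\mathcal{X}_{M,1}(\F_{q^{\ell^2}})$, equality of these finite cardinalities forces equality of the point sets; equivalently $\mathcal{X}_{M,1}$ is a DS-curve for $\F_{q^{\ell^2}}/\F_{q^\ell}$. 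One can also bypass Lemma~\ref{descent}: by constant field extension $\mathcal{X}_{M,1}(\F_{q^{\ell^2}})=\mathcal{X}_{M,\ell}(\F_{q^{\ell^2}})$, whose cardinality is $\#\mathcal{X}_{M,\ell}(\F_{q^\ell})+\ell\,a_\ell(\mathcal{X}_{M,\ell})=\#\mathcal{X}_{M,1}(\F_{q^\ell})$ by the same vanishing.

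I do not expect a serious obstacle. The one step needing care is the bookkeeping in the place-counting formula: one must isolate the contribution of the ramified prime $(M)$ and of the places above $\infty$ from that of the generic unramified primes, and then check that the hypothesis $\deg M>\ell$ eliminates exactly the two surviving cases $(\deg\pi,f_\pi)\in\{(1,\ell),(\ell,1)\}$. The structural input that legitimizes interpreting $f_\pi$ as an order in the full group $(\F_{q^\ell}[t]/M)^*$ is the identification of $\mathcal{X}_{M,\ell}$ with the trivial-subgroup Carlitz curve.
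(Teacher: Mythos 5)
Your proof is correct and follows essentially the same route as the paper: establish $a_\ell(\mathcal{X}_{M,\ell})=0$ for the Carlitz curve over $\F_{q^\ell}$, apply Lemma~\ref{descent} with $k=\ell$ to obtain $a_{\ell^2}(\mathcal{X}_{M,1})=0$, and conclude Diophantine stability from $N_{\ell^2}=N_\ell$. The only difference is that you re-derive the vanishing of $a_\ell$ directly from the place-decomposition formula (and sketch a variant avoiding Lemma~\ref{descent}), whereas the paper simply invokes Corollary~\ref{cor5.2} with $r=1$, which covers exactly this case.
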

	\begin{proof}
    From Corollary \ref{cor5.2}, we have that the 
    $M$-torsion  Carlitz curve 
    $\mathcal{X}_{M,\ell}$ over $\mathbb{F}_{q^{\ell}}$ satisfies $a_{\ell}(\mathcal{X}_{{M},\ell})=0$. Applying Lemma \ref{descent} we get
		$a_{\ell^2}(\mathcal{X}_{{M},1})=0$.
  Therefore by Proposition \ref{prop2.5}, we find 
		$\mathcal{X}_{{M},1}(\mathbb{F}_{q^{\ell}})=\mathcal{X}_{{M},1}(\mathbb{F}_{q^{\ell^2}})$.
	\end{proof}
	\begin{example}
		Take $q=2$, $n=\ell =2$, and $M=t^3+t+1$. Observe that $M$ is irreducible in $\mathbb{F}_{2^2}[t]$. One has
		$$\Phi_{M,\mathfrak{h}_{2,2}}(x)=\Phi_{M,\mathfrak{h}_{4,1}}(x)=x^{63}+(t^{16}+t^4+t)x^{15}+(t^8+t^5+t^2)x^3+(t^3+t+1)\,.$$
	From Corollary \ref{cor5.2}, we get that the Carlitz curve $\mathcal{X}_{{M},2}/\F_{4}$ 
 satisfies $a_2(\mathcal{X}_{{M},2})=0$. By Corollary~\ref{cor6.3}, we obtain
	$a_4(\mathcal{X}_{{M},1})=0$.  Therefore
		$$\mathcal{X}_{{M},1}(\mathbb{F}_{2^2})=\mathcal{X}_{{M},1}(\mathbb{F}_{2^4})$$
		thus the Drinfeld curve $\mathcal{X}_{{M},1}/\F_{2}$ is a DS-curve for the extension $\F_{2^4}/\F_{2^2}$.
	\end{example}

   \bibliographystyle{alpha}
	
\end{document}